\documentclass[11pt,reqno]{amsart}

\usepackage{setspace}
\usepackage[T1]{fontenc}
\usepackage{amssymb,amscd,amsthm,latexsym}
\usepackage{graphicx}
\usepackage[british]{babel}
\usepackage[all]{xy}
\usepackage{color}
\usepackage{mathabx}
\usepackage{calrsfs}
\usepackage{booktabs}
\usepackage{chngcntr}


\hoffset -24mm  
\addtolength{\textwidth}{28mm}  
\voffset -25mm  
\addtolength{\textheight}{40mm}  

\theoremstyle{plain}
\newtheorem{theorem}[subsection]{Theorem}

\newtheorem{proposition}[subsection]{Proposition}
\newtheorem{corollary}[subsection]{Corollary}

\theoremstyle{definition}
\newtheorem{definition}[subsection]{Definition}

\newtheorem{remark}[subsection]{Remark}

\renewcommand{\le}{\leqslant}


\newcommand{\pb}[1][dr]{\save*!/#1-1.5pc/#1:(-1,1)@^{|-}\restore}
\newcommand{\pbdown}[1][d]{\save*!/#1-1.5pc/#1:(-1,1)@^{|-}\restore}

\newcommand{\Grp}{\mathsf{Grp}}

\newcommand{\F}{\mathsf{F}}

\newcommand{\op}{\mathsf{op}}

\newcommand{\Eq}{\mathsf{Eq}}

\newcommand{\D}{\mathbb{D}}
\newcommand{\C}{\mathbb{C}}
\newcommand{\V}{\mathbb{V}}

\newcommand{\Sub}{\mathsf{Sub}}
\newcommand{\eot}[1]{\ensuremath{\in_{#1}}}

\newcommand{\mono}{\ensuremath{\rightarrowtail}}
\newcommand{\repi}{\ensuremath{\twoheadrightarrow}}

\newdir{ >}{
 @{}*!/-10pt/@{>} }

\begin{document}

\title{Categorical aspects of congruence distributivity}

\author{Michael Hoefnagel}
\address{Mathematics Division, Department of Mathematical Sciences, Stellenbosch University, Pri\-vate Bag X1 Matieland 7602, South Africa and National Institute for Theoretical and Computational Sciences (NITheCS), South Africa}
\email{mhoefnagel@sun.ac.za}

\author{Diana Rodelo}
\address{Department of Mathematics, University of the Algarve, 8005-139 Faro, Portugal and CMUC, Department of Mathematics, University of Coimbra, 3001-143 Coimbra, Portugal}
\thanks{The second author acknowledges partial financial support by the Centre for Mathematics of the University of Coimbra (funded by the Portuguese Government through FCT/MCTES, DOI 10.54499/UIDB/00324/2020).}
\email{drodelo@ualg.pt}

\keywords{Congruence distributivity, $n$-permutability, Mal'tsev condition, regular category, trapezoid lemma, majority category.}

\subjclass{%
08B05, 
08B10 
18E08, 
18E13. 
 }

\begin{abstract}
We study a categorical condition on relations, which is a categorical formulation of J\'onsson's characterisation of congruence distributive varieties. Categories satisfying these conditions need not be varieties; for instance, the dual of the categories of topological spaces, ordered sets, $G$-sets, and the dual of any (pre)topos all provide us with examples. 
\end{abstract}{}

\maketitle

\section{Introduction} \label{section: introduction}
A variety of universal algebras $\V$ is \emph{congruence distributive}~\cite{Jonsson1967} if the congruence lattice on any algebra in $\V$ is a distributive lattice. These varieties occupy an extremal situation in universal algebra created by the modular commutator, namely, when it is as large as possible. On the other end, when it is as small as possible, we have the abelian varieties --- a topic which categorical algebra is well suited for. However, there has been comparably less attention given to congruence distributivity, from the categorical point of view. One possible reason for this, is that there are several categorical conditions arising from equivalent characterisations of congruence distributivity for varieties, each with their advantages and disadvantages. Moreover, these categorical conditions are not necessarily equivalent, as are their varietal counterparts. 

Congruences in a variety of universal algebras are just internal equivalence relations in the variety, so the most direct categorical distributivity condition is the notion of an equivalence distributive category given in \cite{GranRodeloNguefeu2020}: a finitely complete category $\C$ is called \emph{equivalence distributive} when the meet semilattice $\Eq(A)$ of equivalence relations on $A$ is a distributive lattice: for any $R,S,T\in \Eq(A)$, we have
\begin{equation}
\label{distributivity}
    R\wedge (S\vee T) = (R\wedge S)\vee (R\wedge T).
\end{equation}
However, this notion has several drawbacks. For one thing, its definition relies on the existence of suprema of equivalence relations --- a condition not guaranteed by familiar categorical contexts such as finite completeness, regularity~\cite{BarrGrilletOsdol1971}, or even Barr-exactness~\cite{BarrGrilletOsdol1971}. Moreover, the definition of equivalence distributive category excludes categories such the variety of (distributive) lattices equipped with an term of countable arity (see \cite{GJanelidze2016}). However, this category is categorically quite similar to the category of lattices --- the prototypical example of a congruence distributive variety.  

A condition which does not involve such an inductive requirement as the existence of suprema of equivalence relations, is a categorical version of the so called \emph{trapezoid lemma}~\cite{Chajda2003}. This condition captures (and is, thus, equivalent to) congruence distributivity in much the same way as H.~P.~Gumm's shifting lemma captures congruence modularity~\cite{Gumm1983}. A variety of universal algebras $\V$ satisfies the trapezoid lemma if for every three congruences $R,S,T$ on any algebra $A$ in $\V$ the following diagrammatic condition holds: 
\begin{equation}\label{trapezoid lemma}
R\wedge S\leqslant T\quad\quad\Rightarrow\quad\quad \vcenter{\xymatrix@=25pt{ 
u \ar@{-}[d]_S\ar@{-}[r]^T & v \\
x \ar@{..}@/_1.5pc/[r]_T \ar@{-}[r]_R & y. \ar@{-}[u]_S }}
\end{equation}
More precisely, if $R\wedge S\le T$ and elements $x,y,u,v$ of $A$ are such that $xSu$, $uTv$, $ySv$, $xRy$, then we can deduce that $xTy$. This condition on equivalence relations can be formulated internally to any category using standard techniques involving the Yoneda embedding, so that we can discuss categories satisfying the trapezoid lemma (see Definition~\ref{definition: trapezoid lemma}). Then a variety satisfies this categorical version of the trapezoid lemma if and only if it satisfies the trapezoid lemma in the above sense. It is easy to prove that any regular equivalence distributive category satisfies the trapezoid lemma (Proposition~\ref{equiv distributive => trapezoid lemma}). We do not know if the converse is true or not (supposing we are in a categorical context where suprema of equivalence relations exist). The main disadvantage in the trapezoid lemma approach is finding convenient equivalence relations $R, S, T$, with $R\wedge S\le T$, to which we can actually apply the property of the diagram in \eqref{trapezoid lemma}. 

In this paper, we introduce and study another possible categorical generalisation of congruence distributive varieties. This generalisation is motivated by B. J\'onsson's  characterisation of congruence distributive varieties \cite{Jonsson1967}, which states that a variety of universal algebras $\V$ is congruence distributive if and only if there exists an $n \geqslant 1$ and ternary  terms $p_1,\dots, p_n$ in the theory of $\V$  such that the equations 
\[
\begin{array}{ll}
    p_i(x,y,x)=x, & \text{for all $i$}, \\ 
    p_1(x,x,y)=x, \\
    p_i(y,x,x)=p_{i+1}(y,x,x), & \text{for $i$ odd}, \\
    p_i(x,x,y)=p_{i+1}(x,x,y), & \text{for $i$ even}, \\
    p_{n}(x,x,y)=y, & \text{if $n$ even}, \\ 
    p_{n}(y,x,x)=x, & \text{if $n$ odd}. \\
\end{array}
\]
hold. This condition can not be directly formulated in a categorical way, since it is a Mal'tsev condition, and the basic language of categories does not deal with terms. However, the existence of the $n$ \emph{J\'onsson terms} above in a variety $\V$ is equivalent to the requirement that for any three congruences $R,S,T$ on any algebra in the variety, we have  

\begin{equation}\label{Jonsson main pp}
R\wedge (S\circ T)\leqslant \underbrace{(R\wedge S) \circ (R\wedge T) \circ (R\wedge S) \circ \cdots}_{(n + 1) \text{ times}}.
\end{equation}
This property can be formulated categorically with respect to equivalence relations, and in the context of a regular category. A regular category $\C$ is called a \emph{J\'onsson category of order $n$} when any three equivalence relations $R, S, T$ on any object in $\C$ satisfies \eqref{Jonsson main pp}. The advantages of this generalisation is that it does not require the existence of suprema of equivalence relations and the property on equivalence relations \eqref{Jonsson main pp} is easier to check than that of the trapezoid lemma.

The aim of this paper is to show that several properties of congruence distributive varieties extend to properties of J\'onsson categories of some order $n$. We will also compare other categorical properties which are associated to congruence distributive varieties, such as the notion of a majority category \cite{Hoefnagel2019a, Hoefnagel2020a}, the notion of a locally anticommutative category in the sense of \cite{Hoefnagel2021}, as well as a categorical counterpart of the trapezoid lemma in the sense of \cite{Duda2000} (see also \cite{Chajda2003}). 

\section{Preliminaries} \label{section: preliminaries}
In this section we recall the notions of subobject, internal relation, as well as the notion of regular category and their properties. For the reader who is already familiar with these notions, we recommend taking note of the notation and conventions described in \ref{subsubsection: notation} below and skipping to Section~\ref{section: trapezoid lemma}. 

\subsection{Subobjects and relations in categories} \label{section: subobjects}
If $m:M_0 \mono X$ and $n:N_0 \mono X$ are monomorphisms in a category $\C$, then we write $m \leqslant n$ if $m$ factors through $n$, i.e., if there exists $\phi:M_0 \rightarrow N_0$ such that $n\phi = m$. This defines a preorder on $\mathsf{Mono}(X)$, the class of all monomorphisms in $\C$ with codomain $X$. The posetal reflection of $\mathsf{Mono}(X)$ is called the \emph{poset of subobjects} of $X$, and is denoted by $\Sub(X)$. Explicitly, a \emph{subobject} $S \in \Sub(X)$ is an equivalence class of monomorphisms with codomain $X$, where two monomorphisms $n,m \in \mathsf{Mono}(X)$ are equivalent if and only if $n \leqslant m$ and $m \leqslant n$. If $s: S_0 \mono X$ is a member of $S$, then we will say that $S$ is the \textit{subobject represented} by $s$ in what follows.  

In any category $\C$ the pullback of a monomorphism along any morphism is again a monomorphism, which is to say that if the diagram:
\[
\xymatrix{
	\bullet \ar@{ >->}[d]_n \ar[r] \pb & \bullet \ar@{ >->}[d]^m \\
	\bullet \ar[r]_f & \bullet 
}
\]
is a pullback diagram in $\C$, and $m$ is a monomorphism, then so is $n$. Given that $\C$ has pullbacks of monomorphisms along monomorphisms and $A,B \in \Sub(X)$ are any subobjects represented by $a:A_0 \mono X$ and $b:B_0 \mono X$ respectively, then we write $A \wedge B$ for the subobject of $X$ represented by the diagonal monomorphism in any pullback 
\[
\xymatrix{
	(A\wedge B)_0\ar[d]_-{p_2} \ar[r]^-{p_1} \ar@{ >->}[dr]& A_0 \ar@{ >->}[d]^{a} \\
	B_0 \ar@{ >->}[r]_{b} & X
}
\]

\subsubsection{Notation and conventions}\label{subsubsection: notation}
The notional conventions given here will be used without mention throughout the rest of the text. Let $A$ be a subobject of an object $X$ and consider a morphism $x\colon S\to X$ in a category $\C$, then:
\begin{itemize}
 \item if $x$ factors through one representative of $A$, then it factors through all representatives of $A$;
 \item we write $x \eot{S} A$ if $x$ factors through a representative of $A$
	\[
	\xymatrix{
		& X \\
		S \ar@{..>}[r] \ar[ru]^{x} & A_0; \ar@{ >->}[u]_a
	}
	\]
 \item if $\C$ has pullbacks and $B$ is a subobject of $X$, then we have $x \in_S A \wedge B$ if and only if $x \in_S A$ and $x \in_S B$;
 \item if $e: E \repi S$ is a regular epimorphism in $\C$, then $xe \eot{E} A$ if and only if $x \eot{S} A$. In fact, this holds if $e$ is a strong epimorphism, as it relies only on the diagonal fill in property of strong epimorphisms.
\end{itemize}

\begin{definition}
	Let $\C$ be a category with finite products, then an \emph{(internal) $n$-ary relation} $R$ between objects $X_1,X_2,...,X_n$ in $\C$ is simply a subobject of $X_1 \times X_2 \times \cdots \times X_n$. 
\end{definition}

We will adopt a standard abuse of notation, namely, we will use the same $R$ to denote the domain of the monomorphism which represents the relation, i.e., a relation $R$ as above is denoted by $R\mono X_1\times \cdots \times X_n$. Also, if $(r_1,r_2)\colon R\mono X\times Y$ is a relation from $X$ to $Y$, the \emph{opposite} of $R$, denoted $R^\op$, is the relation from $Y$ to $X$ given by $(r_2,r_1)\colon R\mono Y\times X$. For a binary relation $R$, and morphisms $x,y$ in $\C$, we will write $x R y$ if and only if $\mathsf{dom}(x) = S = \mathsf{dom}(y)$ and $(x,y) \in_S R$.

The notation introduced above allows for a simple translation of the familiar properties of binary relations. Given a relation $R$ on an object $A$ (i.e. a relation from $A$ to $A$), we say that $R$ is:
\begin{itemize}
    \item \emph{reflexive} if for any morphism $x:S \to A$ in $\C$ we have $x R x$. This is equivalent to the requirement that $\Delta_A \leqslant R$, where $\Delta_A$ is the diagonal relation on $A$, i.e., the identity relation represented by the monomorphism $(1_A, 1_A):A \mono A \times A$;
    \item \emph{symmetric} when $R^\op\leqslant R$;
    \item \emph{transitive} when for any morphisms $x,y,z$ if $xRy$ and $y R z$ then $x R z$;
    \item an \emph{equivalence relation} when it is reflexive, symmetric and transitive;
    \item an \emph{effective equivalence relation} when it is a kernel pair equivalence relation, i.e., $R$ is the kernel pair of some morphism $f$, denoted $R=\Eq(f)$.
\end{itemize}

\subsection{Regular categories}
Recall the definition and main properties of regular categories \cite{BarrGrilletOsdol1971} which will be used throughout this work.
\begin{definition}
\label{definition: regular category}
A category $\C$ is called \emph{regular} if:
\begin{enumerate}
	\item[(i)] $\C$ has finite limits and coequalisers of kernel pairs. 
	\item[(ii)] The class of regular epimorphisms in $\C$ is pullback stable, i.e., if the diagram
	\[
	\xymatrix{
		\bullet \ar[r] \ar@{>>}[d]_p \pb & \bullet \ar@{>>}[d]^f\\
		\bullet \ar[r] & \bullet 
	}
	\]
	is a pullback in $\C$, and $f$ is a regular epimorphism, then so is $p$. 
\end{enumerate}
\end{definition}
The following are important consequences of Definition~\ref{definition: regular category}, and will be used without mention in what follows:
\begin{itemize}
	\item every morphism $f:X \rightarrow Y$ in $\C$ factors as $f = me$ where $e:X \repi  Q$ is a regular epimorphism and $m:Q \mono Y$ a monomorphism. The factorisation $f = me$ is sometimes called an \emph{image factorisation} of $f$. Note that such image factorisations are unique up to isomorphism;
        \item the composite $g f$ of two regular epimorphisms $f: X \repi Y$ and $g:Y \repi Z$ in $\C$ is a regular epimorphism;
	\item if the composite $g f$ of two morphisms $f: X \rightarrow Y$ and $g:Y \rightarrow Z$ in $\C$ is a regular epimorphism, then $g$ is a regular epimorphism.  
\end{itemize}

Given a regular category $\C$, it is possible to define the composition of relations in $\C$. Indeed, let $R$ be a relation between objects $X$ and $Y$, and $S$ a relation between objects $Y$ and $Z$ in a regular category $\C$. Suppose that $r = (r_1,r_2):R \mono X \times Y$ and $s = (s_1,s_2):S \mono Y \times Z$. Consider the diagram:
\[
\xymatrix{
	& &P \pbdown \ar[dl]_{p_1} \ar[dr]^{p_2}  & & \\
	& R \ar[dr]^{r_2} \ar[dl]_{r_1} & & S \ar[dl]_{s_1}\ar[dr]^{s_2} & \\
	X & & Y & & Z
}
\]
where $(P, p_1, p_2)$ is a pullback of $s_1$ along $r_2$. The composite $R\circ S$ is the relation represented by the monomorphism part of any image factorisation of $(r_1p_1, s_2p_2): P \rightarrow X \times Z$ as in the diagram:
\[
\xymatrix{
	P \ar@/_1pc/[rr]_-{(r_1p_1, s_2p_2)} \ar@{->>}[r]^-e &  (R\circ S) \ar@{ >->}[r]^-{r\circ s} & X \times Z.
}
\]
It is well-known that the composition of relations in a regular category is associative (see \cite{BarrGrilletOsdol1971,Borceux1994b}).

\begin{proposition}\emph{\cite{CarboniKellyPedicchio1993}} \label{proposition: relation-composition}
Let $R$ be a relation from $X$ to $Y$ and $S$ a relation from $Y$ to $Z$. Given any morphisms $x:S \to X$ and $z:S\to Z$ then $x(R \circ S)z$ if and only if there exists a regular epimorphism $e: E\repi S$ and a morphism $y:E \rightarrow Y$ such that $xe R y$ and $y S ze$. 
\end{proposition}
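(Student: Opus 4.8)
The plan is to prove both implications by an elementary chase through the explicit construction of $R\circ S$ recalled before the statement. Fix its notation: $(P,p_1,p_2)$ is the pullback of $s_1$ along $r_2$, so that $r_2p_1=s_1p_2$, and $(r_1p_1,s_2p_2)\colon P\to X\times Z$ factors as $(r_1p_1,s_2p_2)=(r\circ s)\,e$ with $e\colon P\repi R\circ S$ a regular epimorphism and $r\circ s\colon R\circ S\mono X\times Z$ the representing monomorphism. I write $W$ for the common domain of the given morphisms $x\colon W\to X$ and $z\colon W\to Z$ (the object called $S$ in the statement), so that $x(R\circ S)z$ means exactly that the pairing $(x,z)\colon W\to X\times Z$ factors through $r\circ s$. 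Both directions then reduce to bookkeeping with the projections $r_1,r_2,s_1,s_2,p_1,p_2$; the only input beyond diagram-chasing is the pullback-stability of regular epimorphisms and the cancellation of a strong epimorphism from a membership statement, both of which are available from the preliminaries above.

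For the ``if'' direction, suppose $g\colon E\repi W$ is a regular epimorphism and $y\colon E\to Y$ satisfies $xg\,R\,y$ and $y\,S\,zg$. By definition of these relational statements, $(xg,y)$ factors through $R\mono X\times Y$ via some $\rho\colon E\to R$, and $(y,zg)$ factors through $S\mono Y\times Z$ via some $\sigma\colon E\to S$; in particular $r_2\rho=y=s_1\sigma$. The universal property of $P$ yields $\pi\colon E\to P$ with $p_1\pi=\rho$ and $p_2\pi=\sigma$, and then $(r_1p_1,s_2p_2)\pi=(r_1\rho,s_2\sigma)=(xg,zg)=(x,z)g$. Composing with the image factorisation gives $(x,z)g=(r\circ s)(e\pi)$, so $(x,z)g\eot{E}R\circ S$; cancelling the (regular, hence strong) epimorphism $g$ yields $(x,z)\eot{W}R\circ S$, that is, $x(R\circ S)z$.

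The ``only if'' direction carries the real content. From $x(R\circ S)z$ I have $(x,z)=(r\circ s)\,h$ for some $h\colon W\to R\circ S$, and I would pull back the comparison epimorphism $e\colon P\repi R\circ S$ along $h$, obtaining $E$ together with $q\colon E\to W$ and $\pi\colon E\to P$ such that $e\pi=hq$:
\[
\xymatrix@=25pt{
E \ar@{->>}[d]_{q} \ar[r]^-{\pi} \pb & P \ar@{->>}[d]^{e} \\
W \ar[r]_-{h} & R\circ S.
}
\]
Since regular epimorphisms are pullback stable in a regular category, $q\colon E\repi W$ is again a regular epimorphism. I then put $y:=r_2p_1\pi=s_1p_2\pi\colon E\to Y$, the two descriptions agreeing because $r_2p_1=s_1p_2$. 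From $(r_1p_1,s_2p_2)=(r\circ s)\,e$ and $e\pi=hq$ one computes $(r_1p_1\pi,s_2p_2\pi)=(r\circ s)(e\pi)=(r\circ s)(hq)=(x,z)q=(xq,zq)$, hence $r_1p_1\pi=xq$ and $s_2p_2\pi=zq$. Therefore $(xq,y)=r\,(p_1\pi)$ factors through $R\mono X\times Y$ and $(y,zq)=s\,(p_2\pi)$ factors through $S\mono Y\times Z$, which is exactly to say that $xq\,R\,y$ and $y\,S\,zq$. Thus the regular epimorphism $q$ together with the morphism $y$ are the required witnesses.

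I do not expect a genuine obstacle here: once one adopts the explicit model of $R\circ S$, the whole argument is formal. The single non-diagrammatic step is the pullback-stability of regular epimorphisms, which is precisely what guarantees that the epimorphism $q$ produced above is \emph{regular} (strongness alone would suffice for everything else, and only strongness of $g$ is actually used in the ``if'' direction). It is also worth noting that associativity of relational composition, although available, plays no role in the argument.
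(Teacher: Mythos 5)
Your proof is correct, and it is the standard argument for this cited result: the paper itself gives no proof (it quotes the proposition from Carboni--Kelly--Pedicchio), and your two-way chase through the explicit image-factorisation construction of $R\circ S$, using pullback stability of regular epimorphisms for the ``only if'' direction and cancellation of a strong epimorphism for the ``if'' direction, is exactly how it is established in the literature. Your renaming of the common domain of $x$ and $z$ to $W$ also correctly resolves the notational clash with the relation $S$ in the statement.
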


From the proposition above it is easy to see that a relation $R$ on an object $A$ is transitive if and only if $R \circ R \leqslant R$.

\subsection{$n$-permutability}\label{n-perm}
Let $\Eq(A)$ denote the meet semilattice of equivalence relations on an object $A$ in a regular category $\C$. Given $R,S\in \Eq(A)$, we follow the notation introduced in~\cite{CarboniKellyPedicchio1993}
\[
    (R,S)_n = \underbrace{R\circ S\circ R\cdots}_n\;;
\]
in particular, $(R,S)_0=\Delta_A$ (the identity relation on $A$), $(R,S)_1=R$, $(R,S)_2=R\circ S$, $(R,S)_3=R\circ S\circ R$, and so on. The relation $(R,S)_n$, $n\geqslant 2$, is not an equivalence relation in general (although it is always reflexive and it is symmetric when $n$ is odd). It is known from~\cite{CarboniKellyPedicchio1993} that $(R,S)_n$ is an equivalence relation, for some $n\geqslant 2$, precisely when the category $\C$ is $n$-permutable.

\begin{definition}
A regular category $\C$ is called \emph{$n$-permutable}, where $n\geqslant 2$, when the equivalence relations in $\C$ are $n$-permutable, that is, given any $R,S\in\Eq(A)$, for any object $A$ of $\C$, we have $(R,S)_n=(S,R)_n$.    
\end{definition}

A $2$-permutable category is usually called a regular \emph{Mal'tsev category}~\cite{CarboniLambekPedicchio1991} and a $3$-per\-mu\-ta\-ble category is usually called a \emph{Goursat category}~\cite{CarboniKellyPedicchio1993} (a Goursat category is regular by definition). It is easy to check that an $n$-permutable category is always $k$-permutable, for any $k\geqslant n$; the converse is false (see~\cite{Schmidt1972}).

When $\C$ is $n$-permutable, $\C$ admits joins of equivalence relations on $A$: for $R,S\in \Eq(A)$
\begin{equation}
\label{join in n-permutable cat}
    R\vee S= (R,S)_n;
\end{equation}
consequently, $\Eq(A)$ is a lattice, for any object $A$ of $\C$. There are several other nice properties of (equivalence) relations in $n$-permutable categories (see~\cite{CarboniKellyPedicchio1993},~\cite{JanelidzeRodeloLinden2014} and~\cite{MartinsRodeloLinden2014}, for example).

\section{Main results}\label{section: trapezoid lemma}
We begin this section by exploring the link between equivalence distributivity and the trapezoid lemma.

Recall from the Introduction that a variety of universal algebras $\V$ satisfies the trapezoid lemma if given congruences $R,S,T$ on any algebra $A$ in $\V$, the diagrammatic condition \eqref{trapezoid lemma} holds. Categorically, we have the following:

\begin{definition} \label{definition: trapezoid lemma}
A category $\C$ is said to satisfy the \emph{trapezoid lemma} when for any three equivalence relations $R,S,T$ on any object $A$ in $\C$ with $R \wedge S \leqslant T$, if morphisms $x,u,v,y$ are such that $xSu$, $u T v$, $y S v$, $xR y$, then we can deduce that $x T y$.   
\end{definition}

An alternative formulation of the trapezoid lemma, which makes use of the context of a regular category $\C$, is simply the following requirement on the equivalence relations $R,S,T$ on any object $A$ in $\C$: 
\begin{equation}\label{TpL}
    R \wedge S \leqslant T\;\;\Rightarrow\;\; R \wedge (S\circ T\circ S) \leqslant T.
\end{equation}
This follows easily from Definition~\ref{definition: trapezoid lemma} and Proposition~\ref{proposition: relation-composition}.

Recall also from the Introduction, that an equivalence distributive category is a finitely complete category in which the lattice of equivalence relations $\Eq(A)$ on any object $A$ forms a distributive lattice (see \cite{GranRodeloNguefeu2020}). 

\begin{proposition}\label{equiv distributive => trapezoid lemma}
Let $\C$ be a regular equivalence distributive category. Then the trapezoid lemma holds in $\C$.
\end{proposition}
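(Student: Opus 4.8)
The plan is to use the regular reformulation of the trapezoid lemma recorded in \eqref{TpL}: since $\C$ is regular, it suffices to prove that for any three equivalence relations $R,S,T$ on an object $A$ with $R\wedge S\leqslant T$ one has $R\wedge(S\circ T\circ S)\leqslant T$. The key observation is that the relational composite $S\circ T\circ S$ is bounded above by the join $S\vee T$ computed in $\Eq(A)$, which exists because $\C$ is equivalence distributive (so that $\Eq(A)$ is a lattice).

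First I would verify that $S\circ T\circ S\leqslant S\vee T$. Composition of relations is monotone, so from $S\leqslant S\vee T$ and $T\leqslant S\vee T$ we obtain $S\circ T\circ S\leqslant (S\vee T)\circ(S\vee T)\circ(S\vee T)$; since $S\vee T$ is an equivalence relation, hence transitive, the right-hand side is $\leqslant S\vee T$. Consequently $R\wedge(S\circ T\circ S)\leqslant R\wedge(S\vee T)$.

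Next I would apply distributivity \eqref{distributivity} to rewrite $R\wedge(S\vee T)=(R\wedge S)\vee(R\wedge T)$. Now $R\wedge S\leqslant T$ by hypothesis and $R\wedge T\leqslant T$ trivially, so $T$ is an upper bound of both meets in $\Eq(A)$, whence $(R\wedge S)\vee(R\wedge T)\leqslant T$. Chaining the inequalities yields $R\wedge(S\circ T\circ S)\leqslant T$, which by \eqref{TpL} is precisely the trapezoid lemma in $\C$.

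There is no serious obstacle here: the argument is essentially the lattice manipulation $R\wedge(S\vee T)=(R\wedge S)\vee(R\wedge T)\leqslant T\vee T=T$. The only points needing care are that regularity is what makes the composite $S\circ T\circ S$ available and makes \eqref{TpL} equivalent to Definition~\ref{definition: trapezoid lemma}, and that the join used in the distributive law is the one in $\Eq(A)$, which is exactly the bound that $S\circ T\circ S$ does not exceed.
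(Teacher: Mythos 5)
Your proof is correct and follows essentially the same route as the paper's: bound $R\wedge(S\circ T\circ S)$ by $R\wedge(S\vee T)$, apply distributivity \eqref{distributivity}, and observe that $(R\wedge S)\vee(R\wedge T)\leqslant T$, verifying \eqref{TpL}. The only difference is that you spell out the intermediate steps (monotonicity of composition and transitivity of $S\vee T$) that the paper leaves implicit.
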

\begin{proof}~~
Given any equivalence relations $R,S,T$ on the object $A$ such that $R\wedge S\le T$, we have
\[
R\wedge (S\circ T\circ S) \le R \wedge (S \vee T) \stackrel{\eqref{distributivity}}{=} (R \wedge S) \vee (R \wedge T) \leqslant T,
\]
which gives \eqref{TpL}.
\end{proof}

Regular Mal'tsev or Goursat categories satisfying the categorical versions of the trapezoid lemma and of the triangular lemma \cite{Duda2000} were studied in \cite{GranRodeloNguefeu2020}. There the authors explored the link between these lemmas and equivalence distributivity in the context of regular Mal'tsev or Goursat categories. In what follows, we will establish some basic properties of categories satisfying the trapezoid lemma.  

Let $\C$ be a regular category. Recall from~\cite{FreydScendrov1990} \emph{Freyd's modular law}: given relations $R\mono{A\times B}$, $S\mono B\times C$ and $T\mono{A\times C}$ we have
\begin{equation}\label{Freyd}
	(R\circ S)\wedge T \le (R\wedge (T\circ S^{\op}))\circ S.
\end{equation}

\begin{proposition} \label{proposition: trapezoid relations}
Let $\C$ be a regular category satisfying the trapezoid lemma. Given any equivalence relations $R,S,T$ on any object $A$ in $\C$ such that $S \circ T \leqslant R \circ S$ and $R\wedge S \leqslant T$, then $T$ permutes with $S$. 
\end{proposition}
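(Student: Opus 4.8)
The plan is to show $S \circ T = T \circ S$ by establishing the two inclusions. Since $S$ and $T$ are equivalence relations, $T \circ S$ and $S \circ T$ are opposite relations (each is the opposite of the other via $R \mapsto R^{\op}$, using symmetry of $S$ and $T$), so it suffices to prove one inclusion, say $S \circ T \leqslant T \circ S$, and the other follows by taking opposites. So the real goal is the single inclusion $S \circ T \leqslant T \circ S$.

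**First I would** translate the hypothesis $R \wedge S \leqslant T$ into the regular-category form of the trapezoid lemma, namely \eqref{TpL}: $R \wedge (S \circ T \circ S) \leqslant T$. Next I would try to bound $S \circ T$ by $R \circ S$ using the first hypothesis $S \circ T \leqslant R \circ S$, and then feed this into Freyd's modular law \eqref{Freyd} to peel off a copy of $S$. Concretely, I expect to write $S \circ T \leqslant R \circ S$ and then consider $(R \circ S) \wedge (\text{something})$; the natural choice is to intersect with $S \circ T$ itself, or better, to apply \eqref{Freyd} with the roles arranged so that $(R \circ S) \wedge (S\circ T)$ (or $(R\circ S)\wedge T$, after another reduction) gets rewritten as $\big(R \wedge ((S\circ T)\circ S^{\op})\big)\circ S = \big(R \wedge (S\circ T\circ S)\big)\circ S$, using symmetry of $S$. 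At that point the trapezoid consequence \eqref{TpL} gives $R \wedge (S\circ T\circ S) \leqslant T$, so the whole expression is bounded by $T \circ S$.

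**The key steps, in order:** (1) reduce to proving $S \circ T \leqslant T \circ S$, noting the other inclusion is its opposite; (2) observe $S \circ T \leqslant (R \circ S) \wedge (S \circ T)$ trivially, or set up the intersection that Freyd's law will act on — the delicate point is choosing which relation plays the role of ``$T$'' in \eqref{Freyd} and verifying the hypothesis $S\circ T\leqslant R\circ S$ lets us replace $S\circ T$ by $R\circ S$ inside that slot without losing the bound; (3) apply \eqref{Freyd} to rewrite the intersection as $\big(R \wedge (S\circ T\circ S^{\op})\big)\circ S$; (4) use $S^{\op} = S$ and then \eqref{TpL} to conclude $R \wedge (S\circ T\circ S) \leqslant T$, whence the expression is $\leqslant T\circ S$; (5) chain the inclusions to get $S\circ T \leqslant T\circ S$ and take opposites for equality.

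**The main obstacle I anticipate** is getting the bookkeeping of Freyd's modular law exactly right: \eqref{Freyd} is stated for relations $R \mono A\times B$, $S\mono B\times C$, $T\mono A\times C$ on possibly different objects, and here everything lives on a single object $A$, so I must be careful about which composite is ``$R\circ S$'' and which subobject is the ``$T$'' being intersected — a wrong assignment will produce $S^{\op}$ in the wrong place or fail to match the pattern $S\circ T\circ S$ needed to invoke \eqref{TpL}. A secondary subtlety is making sure the first hypothesis $S\circ T \leqslant R\circ S$ is used in the correct direction (it upgrades a term inside a meet, which is monotone, so this should be safe, but it needs to be checked that the meet is with a relation that is itself $\leqslant S\circ T$ or contains it appropriately). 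Once the modular-law application is set up correctly, the rest is a short chain of monotonicity and the already-established trapezoid consequence.
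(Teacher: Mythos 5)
Your proposal is correct and follows essentially the same route as the paper: bound $S\circ T$ by $(R\circ S)\wedge(S\circ T)$ using the hypothesis, apply Freyd's modular law to get $\bigl(R\wedge(S\circ T\circ S^{\op})\bigr)\circ S$, use symmetry of $S$ and the trapezoid consequence \eqref{TpL} to conclude $\leqslant T\circ S$. The only difference is that you make explicit the reduction to a single inclusion via opposites, which the paper leaves implicit.
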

\begin{proof}
Given the relations as in the statement, we have
\[ 
\begin{array}{lll}
 S\circ T &\leqslant (R\circ S) \wedge (S \circ T)  & \text{(assumption)}\\ 
  & \stackrel{\eqref{Freyd}}{\leqslant} (R\wedge (S\circ T \circ S^{\op}))\circ S \;\;\;\;&  \\
  &= (R\wedge (S\circ T \circ S))\circ S & \text{(S is symmetric)}\\
  & \stackrel{\eqref{TpL}}{\leqslant} T \circ S. 
\end{array}
\]
\end{proof}

Recall that a regular category $\C$ is called \emph{factor permutable} \cite{Gran2004} if for any equivalence relation $E$ on an object $A$ we have $E \circ \Eq(p) = \Eq(p) \circ E$ where $p:A \rightarrow X$ is a product projection of $A$. 

\begin{proposition} \label{proposition: trapezoid implies factor permutable}
A regular category $\C$ satisfying the trapezoid lemma is factor permutable. 
\end{proposition}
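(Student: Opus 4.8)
The goal is to show that for any equivalence relation $E$ on an object $A$ and any product projection $p\colon A \to X$, we have $E \circ \Eq(p) = \Eq(p) \circ E$. Since both relations are reflexive, it suffices to prove one inclusion, say $\Eq(p) \circ E \leqslant E \circ \Eq(p)$; the reverse then follows by taking opposites and using that $E$ and $\Eq(p)$ are symmetric. So the plan is to set up the hypotheses of Proposition~\ref{proposition: trapezoid relations} with a judicious choice of the three equivalence relations $R, S, T$, and then read off the permutability conclusion.

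The natural candidates are $S = \Eq(p)$ and $T = E$, since we want these two to permute. It remains to produce an equivalence relation $R$ with $S \circ T \leqslant R \circ S$ and $R \wedge S \leqslant T$. The idea is to exploit that $p$ is a product projection: write $A \cong X \times Y$ with $p$ the first projection, let $q\colon A \to Y$ be the second projection, and take $R = \Eq(q)$. Then $\Eq(p) \wedge \Eq(q) = \Delta_A$ (the pair of projections is jointly monic), so $R \wedge S = \Delta_A \leqslant T$ holds trivially. The first hypothesis $S \circ T \leqslant R \circ S$, i.e. $\Eq(p) \circ E \leqslant \Eq(q) \circ \Eq(p)$, should follow from the fact that in a regular category $\Eq(q) \circ \Eq(p) = \Eq(\langle p, q\rangle)$-composite behaviour — more precisely, $\Eq(p)$ and $\Eq(q)$ already permute (product projections of a product permute with each other, since $\Eq(p)\circ\Eq(q) = A\times_X A \times_Y \cdots$ collapses to the total relation $\nabla_A$ in each fibre). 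Indeed $\Eq(p)\circ\Eq(q)$ is the largest equivalence relation that any reasonable candidate could be, and in fact one checks $\Eq(q)\circ\Eq(p)$ contains everything of the form $x \Eq(p) z \Eq(q) y$; since $E \leqslant \nabla_A$ and $\Eq(p)\circ E \leqslant \Eq(p)\circ\nabla_A = \nabla_A = \Eq(q)\circ\Eq(p)$ when $X$ has a point — but to avoid pointedness assumptions I would instead argue directly with generalised elements via Proposition~\ref{proposition: relation-composition}.

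Concretely, to verify $\Eq(p)\circ E \leqslant \Eq(q)\circ\Eq(p)$: suppose $x(\Eq(p)\circ E)y$ for generalised elements $x,y\colon S \to A$. By Proposition~\ref{proposition: relation-composition} there is a regular epi $e\colon E_0 \repi S$ and $w\colon E_0 \to A$ with $xe\,\Eq(p)\,w$ and $w\,E\,ye$. Now consider the element $\langle p\cdot w, q\cdot xe\rangle\colon E_0 \to X\times Y \cong A$, call it $t$. Then $p\cdot t = p\cdot w = p\cdot xe$ gives $xe\,\Eq(p)\,t$, and $q\cdot t = q\cdot xe$; but I actually want $t\,\Eq(q)\,ye$ and $t\,\Eq(p)\,ye$ or rather to land in $\Eq(q)\circ\Eq(p)$. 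Reworking: take $t = \langle p\cdot ye,\, q\cdot xe\rangle$. Then $q\cdot t = q\cdot xe$ so $xe\,\Eq(q)\,t$, and $p\cdot t = p\cdot ye$ so $t\,\Eq(p)\,ye$. Hence $xe\,(\Eq(q)\circ\Eq(p))\,ye$, and since $e$ is a regular epi, $x\,(\Eq(q)\circ\Eq(p))\,y$ by Proposition~\ref{proposition: relation-composition} again. This gives $S \circ T \leqslant R\circ S$ with $R = \Eq(q)$.

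With both hypotheses of Proposition~\ref{proposition: trapezoid relations} in hand, we conclude that $T = E$ permutes with $S = \Eq(p)$, i.e. $E\circ\Eq(p) = \Eq(p)\circ E$, which is exactly factor permutability. The main subtlety to handle carefully is the verification of $\Eq(p)\circ E \leqslant \Eq(q)\circ\Eq(p)$ in a way that does not secretly assume $\C$ is pointed or well-pointed; the generalised-element computation above via Proposition~\ref{proposition: relation-composition} is the clean way to do this, and it is really just the statement that $\Eq(p)$ and $\Eq(q)$ are "complementary" in the strong sense that $\Eq(p)\vee\Eq(q) = \nabla_A$ realised by a single composite. Everything else — symmetry of the relations, reducing to one inclusion — is routine.
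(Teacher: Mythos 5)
Your proof is correct and follows essentially the paper's own argument: with the complementary projection $q$, take $R=\Eq(q)$, $S=\Eq(p)$, $T=E$ and apply Proposition~\ref{proposition: trapezoid relations}, the two hypotheses holding because $\Eq(p)\wedge\Eq(q)=\Delta_A$ and $\Eq(q)\circ\Eq(p)=\nabla_A$. Note that your generalised-element verification of $\Eq(p)\circ E\leqslant \Eq(q)\circ\Eq(p)$ never actually uses the factorisation through $E$, so it simply re-proves $\Eq(q)\circ\Eq(p)=\nabla_A$ (no pointedness is involved), which is exactly the observation the paper uses.
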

\begin{proof}
Given any product diagram 
\[
X \xleftarrow{p_1} A \xrightarrow{p_2} Y
\]
we note that $\Eq(p_1) \wedge \Eq(p_2) = \Delta_A$ and $\Eq(p_1) \circ \Eq(p_2) =\nabla_A$, which denotes the largest equivalence relation on $A$. Now given any equivalence relation $E$ on $A$ and applying Proposition~\ref{proposition: trapezoid relations} in the case $R = \Eq(p_1)$ and $S = \Eq(p_2)$ and $T = E$, we get the desired result.
\end{proof}

\subsection{J\'onsson categories}
In this subsection we introduce the notion of a J\'onsson category of order $n$ as a categorical generalisation of varieties admitting $n$ J\'onsson terms for $n \geqslant 1$. A usual procedure to obtain such kind of generalisation is to consider a characteristic property on relations of the variety being studied, when such a property can be expressed categorically. We note here that a general class of properties which allow for the translation of Mal'tsev conditions into their corresponding categorical conditions has been given in \cite{ZJanelidze2008}. In particular, it was shown in that paper how the \emph{linear Mal'tsev conditions} (see \cite{Snow1999}) can be interpreted as a property determined by an extended matrix of variables.

\begin{definition}\label{definition: Jonsson Category} A regular category $\C$ is said to be a \emph{J\'onsson category of order $n$} (where $n \in \mathbb{N})$ if for any equivalence relations $R,S,T$ on any object $A$ of $\C$, we have
\begin{equation}\label{Jonsson pp}
	R\wedge (S\circ T)\leqslant ( R\wedge S, R\wedge T)_{n+1}.
\end{equation}
\end{definition}
\begin{remark}
A variety of universal algebras $\V$ admits $n$ J\'onsson terms if and only if it is a J\'onsson category of order $n$. The proof of this is implicitly in \cite{Jonsson1967}, and for this reason we omit it. 
\end{remark}
\begin{remark} \label{remark: majority categories}
A J\'onsson category of order 1 is precisely a regular majority category (see Theorem 3.1 in~\cite{Hoefnagel2020a}). Note also, that the notion of majority category~\cite{Hoefnagel2019a} does not require the base category to be regular and has been defined as a property which may be formulated with respect to any category. 
\end{remark}

\begin{theorem} \label{theorem: main theorem}
Let $\C$ be a regular category. The following statements are equivalent for $n\geqslant 1$:
\begin{enumerate}
    \item[\emph{(i)}] $\C$ is a J\'onsson category of order $n$;
    \item[\emph{(ii)}] Given an equivalence relation $R$ and reflexive relations $S,T$ on any object $A$ of $\C$, we have
    \begin{equation}\label{Jonsson pp for vars}
    R\wedge (S\circ T)\leqslant \left(\, (R\wedge S^\op)\circ (R\wedge S),(R\wedge T)\circ(R\wedge T^\op) \,\right)_{n+1}.
    \end{equation}
\end{enumerate}
\end{theorem}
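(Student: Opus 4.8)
The plan is to prove both implications directly by manipulating the relations, using the fact that a reflexive relation $U$ on $A$ satisfies $\Delta_A \leqslant U$ (so that $U \leqslant U\circ V$ and $U \leqslant V \circ U$ for any reflexive $V$), together with Proposition~\ref{proposition: relation-composition} for handling the composites pointwise. The implication (ii)$\Rightarrow$(i) is the easy direction: given equivalence relations $R,S,T$, apply \eqref{Jonsson pp for vars} with these same $S$ and $T$. Since $S$ and $T$ are now symmetric, $S^\op = S$ and $T^\op = T$, so the right-hand side of \eqref{Jonsson pp for vars} becomes $\left((R\wedge S)\circ(R\wedge S), (R\wedge T)\circ(R\wedge T)\right)_{n+1}$. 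Because $R\wedge S$ and $R\wedge T$ are equivalence relations (meets of equivalence relations), they are transitive, so $(R\wedge S)\circ(R\wedge S) \leqslant R\wedge S$ and likewise for $T$; hence this composite-of-composites is contained in $(R\wedge S, R\wedge T)_{n+1}$, giving \eqref{Jonsson pp}.

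For (i)$\Rightarrow$(ii), the idea is to reduce the reflexive relations $S,T$ to equivalence relations by replacing them with a symmetrised, transitive-closure-free surrogate that still sits inside the target. First I would observe that $R\wedge (S\circ T) \leqslant R\wedge (S\circ T\circ S^\op\circ T^\op)$? No --- rather, the cleaner route is: since $S$ is reflexive, $S \leqslant S^\op\circ S$ and $S \leqslant S\circ S^\op$, and similarly $T\leqslant T\circ T^\op$, $T\leqslant T^\op\circ T$; so from $S\circ T$ one can pass to a composite involving the symmetric reflexive relations $S' := S^\op\circ S$ (wait, that need not be symmetric either). The correct surrogate is $\bar S := S\vee S^\op$ computed as a relation, but that reintroduces transitive closure. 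Instead I would use the standard trick: set $S_1 = R\wedge S^\op$, $S_2 = R\wedge S$, $T_1 = R\wedge T$, $T_2 = R\wedge T^\op$, and show that $R\wedge(S\circ T) \leqslant (S_1\circ S_2)\circ(T_1\circ T_2)$ by a pointwise argument via Proposition~\ref{proposition: relation-composition}, using that a pair lying in $R\wedge(S\circ T)$ can be ``zig-zagged'' through the factorisation point; then apply the J\'onsson inequality \eqref{Jonsson pp} to the \emph{equivalence} relations $R$, $E_S := S_1\circ S_2$? --- but $S_1\circ S_2$ need not be an equivalence relation.

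So the actual mechanism must be: observe that $(R\wedge S^\op)\circ(R\wedge S)$ and $(R\wedge T)\circ(R\wedge T^\op)$ are \emph{symmetric} reflexive relations contained in $R$; take their equivalence-relation closures within the lattice of equivalence relations, call them $\hat S$ and $\hat T$ --- no, closure is not available without suprema. The resolution, which I expect to be the technical heart of the argument, is to apply hypothesis (i) not to $R,S,T$ but to the triple $R$, $(R\wedge S^\op)\circ(R\wedge S)$, $(R\wedge T)\circ(R\wedge T^\op)$ after first checking that \emph{these} are contained in genuine equivalence relations only to the extent needed, OR --- more likely the intended proof --- to run the J\'onsson chain-inequality argument of \cite{Jonsson1967,CarboniKellyPedicchio1993} directly at the level of relations, tracking that each ``$R\wedge S$'' slot can be refined to ``$(R\wedge S^\op)\circ(R\wedge S)$'' because the underlying term-computation only ever uses the composite in that order. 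Concretely: start from $R\wedge(S\circ T)$, use reflexivity of $S,T$ to write $S\circ T \leqslant S\circ T\circ S^\op \circ T^\op \circ \cdots$ padding out to length $n+1$ with reflexive relations, intersect with $R$ and push the $R$ inside each factor using Freyd's modular law \eqref{Freyd} repeatedly, and collect adjacent factors into the blocks $(R\wedge S^\op)\circ(R\wedge S)$ and $(R\wedge T)\circ(R\wedge T^\op)$. The main obstacle is bookkeeping: ensuring the alternating pattern of $S/S^\op$ and $T/T^\op$ in the padded composite matches exactly the block structure on the right of \eqref{Jonsson pp for vars} for both parities of $n$, and that every application of the modular law \eqref{Freyd} distributes $R$ correctly without picking up an unwanted $S^\op$ where an $S$ is required. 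I would organise this as an induction on $n$, handling the base case $n=1$ (majority categories, Remark~\ref{remark: majority categories}) explicitly and then using the associativity of relational composition to extend the chain.
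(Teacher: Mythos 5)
Your treatment of (ii) $\Rightarrow$ (i) is correct and is exactly the paper's (easy) argument: with $S,T$ symmetric and $R\wedge S$, $R\wedge T$ transitive, the right-hand side of \eqref{Jonsson pp for vars} collapses into $(R\wedge S, R\wedge T)_{n+1}$.

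The direction (i) $\Rightarrow$ (ii), however, has a genuine gap: you never produce equivalence relations to which hypothesis (i) can legitimately be applied, and your fallback plan cannot work. Your ``alternative A'' (apply (i) to $R$, $(R\wedge S^\op)\circ(R\wedge S)$, $(R\wedge T)\circ(R\wedge T^\op)$) founders exactly where you suspect: these composites are reflexive but in general neither symmetric nor transitive, so (i) simply does not apply to them, and no closure operation is available. Your ``alternative B'' (pad $S\circ T$ with reflexive factors, intersect with $R$, and distribute $R$ into the factors by repeated use of Freyd's modular law \eqref{Freyd}) does not invoke hypothesis (i) at all; if such a calculation were valid it would establish \eqref{Jonsson pp for vars} in \emph{every} regular category, hence make every regular category a J\'onsson category of some order, contradicting Remark~\ref{Gumm not imply Jonsson} (e.g.\ $\mathsf{Ab}$ or $\Grp$). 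Moreover \eqref{Freyd} only yields terms of the form $R\wedge(S\circ T\circ\cdots)$, not the required blocks $R\wedge S$, $R\wedge T$. The missing idea, which is the technical heart of the paper's proof, is the construction of an auxiliary object on which (i) \emph{can} be applied: starting from $(a,c)\eot{X} R\wedge(S\circ T)$, one builds the quaternary relation $W$ on $A$ given by $(x,z,y,w)\eot{B} W$ iff $xRz$, $yRw$, $xSw$, $wTz$, takes its image $D\mono A^3$ along a suitable projection, and then applies hypothesis (i) to the kernel pairs $D_1=\Eq(d_1)$, $D_2=\Eq(d_2)$, $D_3=\Eq(d_3)$ of the three projections $d_i\colon D\to A$ --- these \emph{are} equivalence relations. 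Feeding the elements $(ae_1,ae_1,ae_1)$, $(ae_1,b,ce_1)$, $(ce_1,ae_1,ce_1)$ of $D$ into $D_2\wedge(D_1\circ D_3)\leqslant(D_2\wedge D_1,D_2\wedge D_3)_{n+1}$ and unwinding the resulting chain (with separate bookkeeping for $n$ odd and $n$ even) is what produces the alternating $(R\wedge S^\op)\circ(R\wedge S)$ and $(R\wedge T)\circ(R\wedge T^\op)$ blocks; this mimics the role of the J\'onsson terms in the varietal argument, which is precisely what bare relation calculus cannot replace. Your suggested induction on $n$ with base case $n=1$ also does not obviously get off the ground, since being J\'onsson of order $n$ does not reduce to order $n-1$; the paper argues directly for each $n\geqslant 2$.
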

The property (ii) in the theorem above has been extracted from \cite{Jonsson1967} (see equation (4) therein). The proof of the implication (ii) $\Rightarrow$ (i) is straightforward by using the symmetry and transitivity of $S$ and $T$. Moreover, for $n=1$, the implication (i) $\Rightarrow$ (ii) follow from Theorem 3.1 in~\cite{Hoefnagel2020a}. The proof of the implication (i) $\Rightarrow$ (ii) for the general case when $n\geqslant 2$ is a bit technical. For this reason, we decided to postpone its proof to the end of this work (Section~\ref{section: proof of main thm}), while focussing now on the nice consequences and properties that can be extracted from Theorem~\ref{theorem: main theorem}.

\begin{corollary}\label{corollary: J_n implies J_k}
If $\C$ is a J\'onsson category of order $n$, then it is a J\'onsson category of order $k$, for any $k\geqslant n$.
\end{corollary}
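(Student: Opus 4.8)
The plan is to deduce this from pure monotonicity considerations, without invoking J\'onsson terms or Theorem~\ref{theorem: main theorem} directly. Fix equivalence relations $R,S,T$ on an object $A$ of $\C$. Since $\C$ is a J\'onsson category of order $n$ we already have $R\wedge(S\circ T)\leqslant (R\wedge S, R\wedge T)_{n+1}$, so the only thing left to prove is the chain of inclusions $(R\wedge S, R\wedge T)_{n+1}\leqslant (R\wedge S, R\wedge T)_{k+1}$ for every $k\geqslant n$, and then chain it with the order-$n$ inequality to obtain \eqref{Jonsson pp} for order $k$.

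The key observation is that for any two \emph{reflexive} relations $U,V$ on $A$ the sequence $(U,V)_0\leqslant (U,V)_1\leqslant (U,V)_2\leqslant\cdots$ is increasing. First I would note that, by the very definition of the notation $(-,-)_m$, one has $(U,V)_{m+1}=(U,V)_m\circ W$, where $W$ is $U$ or $V$ depending on the parity of $m+1$; then, using $\Delta_A\leqslant W$ (reflexivity of $W$) together with monotonicity of relational composition, $(U,V)_m=(U,V)_m\circ\Delta_A\leqslant (U,V)_m\circ W=(U,V)_{m+1}$. Since $R\wedge S$ and $R\wedge T$ are themselves equivalence relations, hence reflexive, applying this with $U=R\wedge S$ and $V=R\wedge T$ and iterating the inequality gives precisely the desired chain $(R\wedge S, R\wedge T)_{n+1}\leqslant\cdots\leqslant (R\wedge S, R\wedge T)_{k+1}$. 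As $R,S,T$ and $A$ were arbitrary, $\C$ is a J\'onsson category of order $k$.

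I do not expect any genuine obstacle: the argument is a one-line monotonicity computation once the increasing-sequence observation is in place. The only mild point of care is the bookkeeping of whether the last factor of $(U,V)_{m+1}$ is $U$ or $V$, but this is irrelevant exactly because both $U$ and $V$ contain $\Delta_A$.
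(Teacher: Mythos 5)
Your argument is correct and is essentially the paper's own proof: the Corollary is proved there in one line by combining Definition~\ref{definition: Jonsson Category} with the inclusion $(R\wedge S,R\wedge T)_{n+1}\leqslant (R\wedge S,R\wedge T)_{k+1}$, which holds because $R\wedge S$ and $R\wedge T$ are reflexive. Your write-up merely makes explicit the monotonicity step $(U,V)_m=(U,V)_m\circ\Delta_A\leqslant (U,V)_m\circ W=(U,V)_{m+1}$ that the paper leaves implicit.
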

\begin{proof}
    This follows immediately from Definition~\ref{definition: Jonsson Category} and the fact that $(R\wedge S,R\wedge T)_{n+1}\leqslant (R\wedge S,R\wedge T)_{k+1}$, since $R,S$ and $T$ are reflexive relations.
\end{proof}

Under the assumption that the base category $\C$ is $n$-permutable (so that suprema of equivalence relations on a same object $A$ exist in $\C$, so that $\Eq(A)$ is a lattice), we shall prove in Proposition~\ref{proposition: n-permutable and J_n} that the notion of equivalence distributive category coincides with that of a J\'onsson category (of order $n-1$).

\begin{proposition}
\label{proposition: n-permutable+Jonsson implies ...}
Let $\C$ be an $n$-permutable category ($n\geqslant 2$) and a J\'onsson category of any order. Given $R,S,T\in \Eq(A)$, for some object $A$ of $\C$, we have
\[
    R\wedge (S,T)_j \leqslant (R\wedge S)\vee (R\wedge T),\; \text{for any $j\geqslant 1$}.
\]
\end{proposition}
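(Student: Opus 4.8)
The plan is to induct on $j$, using the $n$-permutability hypothesis to reduce the length of the iterated composite $(S,T)_j$ and the J\'onsson hypothesis (in the form of Definition~\ref{definition: Jonsson Category}) to absorb a single composition $S\circ T$ into a join of meets. The base cases $j=1,2$ are immediate: for $j=1$ we have $R\wedge S\leqslant (R\wedge S)\vee (R\wedge T)$ trivially, and for $j=2$ the inequality $R\wedge (S\circ T)\leqslant (R\wedge S, R\wedge T)_{m+1}\leqslant (R\wedge S)\vee (R\wedge T)$ is exactly \eqref{Jonsson pp} followed by the observation that $(R\wedge S, R\wedge T)_{m+1}$ is contained in the join $(R\wedge S)\vee (R\wedge T)$, which exists because $\C$ is $n$-permutable and which dominates every iterated composite of the two reflexive relations $R\wedge S$ and $R\wedge T$.

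For the inductive step I would first observe that since $\C$ is $n$-permutable, any iterated composite $(S,T)_j$ is contained in the corresponding join $S\vee T = (S,T)_n = (T,S)_n$, and more usefully that we may always rewrite $(S,T)_j$ with $j\geqslant n$ as a composite of bounded length; in particular it suffices to handle $1\leqslant j\leqslant n$, since for $j> n$ we have $(S,T)_j = (S,T)_n$ (both equal $S\vee T$ in the $n$-permutable setting, as $(S,T)_n$ is an equivalence relation). So fix $2< j\leqslant n$ and assume the result for all smaller indices. Write $(S,T)_j = (S,T)_{j-2}\circ S\circ T$ (for the appropriate parity; the other parity is symmetric, using that $R\wedge (S,T)_j = R\wedge (T,S)_j^{\op}$ and that the join is symmetric). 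Applying Freyd's modular law \eqref{Freyd} to $R\wedge \big((S,T)_{j-2}\circ (S\circ T)\big)$ gives a containment inside $\big(R\wedge ((S,T)_{j-2}\circ \cdots)\big)\circ (S\circ T)$-type expressions, but the cleaner route is: set $U=(R\wedge S)\vee (R\wedge T)$, an equivalence relation by $n$-permutability; by the inductive hypothesis $R\wedge (S,T)_{j-2}\leqslant U$ and also $R\wedge (S\circ T)\leqslant U$. Then
\[
R\wedge (S,T)_j = R\wedge \big((S,T)_{j-2}\circ S\circ T\big)\leqslant \big(R\wedge (S,T)_{j-2}\circ S\big)\circ T
\]
does not close up directly, so instead I would use the standard trick of intersecting with $R$ at each stage: since $R$ is an equivalence relation, $R\wedge (A\circ B)\leqslant (R\wedge(A\circ R))\circ(R\wedge (R\circ B))$-style estimates, obtained by repeated application of \eqref{Freyd} together with $R^{\op}=R$ and $R\circ R = R$, let one peel off one $S$ or one $T$ at a time while keeping everything below $R$; each peeled factor lands in $R\wedge S$ or $R\wedge T$, hence in $U$, and the remaining shorter composite is handled by induction.

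The main obstacle I anticipate is precisely this bookkeeping: carefully threading $R$ through an iterated composite via Freyd's modular law so that each of the $j$ factors gets replaced by $R\wedge S$ or $R\wedge T$ without the lengths growing out of control, and getting the parities right so that the resulting bounded composite of $R\wedge S$'s and $R\wedge T$'s is visibly below the equivalence relation $U=(R\wedge S)\vee(R\wedge T)=(R\wedge S,R\wedge T)_n$. Once one knows $R\wedge(S\circ T)\leqslant U$ from the J\'onsson hypothesis and that $U$ is transitive and reflexive, the induction should go through, with the $n$-permutability used exactly twice: to know $U$ is an equivalence relation, and to truncate composites of length $>n$. I would write the step for one parity in full and remark that the other follows by applying the established case to $R$, $T$, $S$ and taking opposites, noting $((R\wedge T),(R\wedge S))_k^{\op}$ is again contained in $U$ by symmetry of the join.
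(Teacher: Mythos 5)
Your base cases and the observation that one may truncate at $j\leqslant n$ are fine, but the inductive step --- which you yourself flag as the ``main obstacle'' and treat as bookkeeping --- contains a genuine gap, and it is precisely the mathematical heart of the proposition. Freyd's modular law \eqref{Freyd} peels a factor off a composite in the form $(X\circ S)\wedge R \leqslant (X\wedge (R\circ S^{\op}))\circ S$: the peeled factor is $S$ itself, \emph{not} $R\wedge S$, so it does not land in $U=(R\wedge S)\vee (R\wedge T)$. Your proposed repair, estimates of the shape $R\wedge (A\circ B)\leqslant (R\wedge (A\circ R))\circ (R\wedge (R\circ B))$, is vacuous in the situation at hand: since $A$ and $B$ are reflexive, $R\leqslant A\circ R$ and $R\leqslant R\circ B$, so both factors on the right contain all of $R$ and are in no way below $U$. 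More fundamentally, ``distributing $R\wedge{-}$ over a composite so that each factor becomes $R\wedge S$ or $R\wedge T$'' is exactly the nontrivial content of the Jónsson hypothesis and cannot be manufactured from Freyd's law, which holds in every regular category (think of abelian groups, where such a distribution fails); and the instance of the hypothesis you allow yourself, \eqref{Jonsson pp}, cannot be applied in the inductive step because $(S,T)_{j}$ is only a reflexive (for odd $j$, symmetric) relation, not an equivalence relation.

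The missing ingredient is Theorem~\ref{theorem: main theorem}(ii), i.e.\ the inequality \eqref{Jonsson pp for vars} valid for an equivalence relation $R$ and merely \emph{reflexive} relations. This is how the paper closes the induction: after using Corollary~\ref{corollary: J_n implies J_k} to assume the Jónsson order $k$ satisfies $k\geqslant n-1$ (so that $\C$ is $(k+1)$-permutable), one applies \eqref{Jonsson pp for vars} with the reflexive relation $(S,T)_j$ in place of $S$ (keeping track of its opposite, which is $(S,T)_j$ or $(T,S)_j$ according to parity) and $T$ or $S$ as the second relation; the induction hypothesis bounds $R\wedge (S,T)_j$ by $U$, transitivity of $U$ collapses $U\circ U$ to $U$, and $(k+1)$-permutability identifies $(U,R\wedge T)_{k+1}$ with $U\vee (R\wedge T)=U$. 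Without invoking the reflexive-relation form (ii) (or reproving its content), your peeling scheme does not go through; as written, the step from $j$ to $j+1$ (equivalently, the cases $3\leqslant j\leqslant n$ after your truncation) is not established. A minor additional slip: for odd $j$ the tail of $(S,T)_j$ is $T\circ S$, not $S\circ T$, so the parity bookkeeping in your decomposition also needs adjusting.
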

\begin{proof}
Suppose that $\C$ is a J\'onsson category of order $k\geqslant 1$. We can suppose that $k\geqslant n-1$ by Corollary~\ref{corollary: J_n implies J_k}. Since $k\geqslant n-1$, then $\C$ is also $(k+1)$-permutable. For $j=1$, the inequality is obvious. We prove the above inequality by induction on $j\geqslant 2$. For $j=2$, we have
$$
\begin{array}{lll}
    R\wedge (S,T)_2 & =  R\wedge (S\circ T)& \\
    &\leqslant  (\, R\wedge S, R\wedge T\,)_{k+1} \;\;\;\; & \text{(by Definition~\ref{definition: Jonsson Category})} \\
    & =  (R\wedge S)\vee (R\wedge T) &\text{($\,(k+1)$-permutability)}.
\end{array}
$$
Suppose now that $R\wedge (S,T)_j\leqslant (R\wedge S)\vee (R\wedge T)$, for some $j\geqslant 2$. When $j$ is odd
$$
\begin{array}{ll}
    R\wedge (S,T)_{j+1} = R\wedge ( (S,T)_j \circ T)\;\;\;\;\;\; & \\
    \stackrel{\eqref{Jonsson pp for vars}}{\leqslant}  (\, (R\wedge (S,T)_j)\circ (R\wedge (S,T)_j), R\wedge T\,)_{k+1} & \text{($(S,T)_j^\op=(S,T)_j$, $T^\op=T$)} \\
    \leqslant (\, ((R\wedge S) \vee (R\wedge T))\circ ((R\wedge S) \vee (R\wedge T)) , R\wedge T\,)_{k+1} & \text{(induction assumption)}\\
    = (\, (R\wedge S) \vee (R\wedge T), R\wedge T\,)_{k+1} & \text{($(R\!\wedge\! S)\!\vee\! (R\!\wedge\! T)$ transitive)}\\
    = ( (R\wedge S)\vee (R\wedge T))\vee (R\wedge T) &\text{($\,(k+1)$-permutability)}\\
    = (R\wedge S)\vee (R\wedge T).
\end{array}
$$
When $j$ is even
$$
\begin{array}{ll}
    R\wedge (S,T)_{j+1}  = R\wedge ( (S,T)_j\circ S)\\
    \stackrel{\eqref{Jonsson pp for vars}}{\leqslant} (\,(R\wedge(T,S)_j)\circ ( R\wedge (S,T)_j), R\wedge S\,)_{k+1},& \text{($(S,T)_j^\op=(T,S)_j$, $T^\op=T$)} \\
    \leqslant (\, ((R\wedge S) \vee (R\wedge T))\circ ((R\wedge S) \vee (R\wedge T)), R\wedge S\,)_{k+1} & \text{(induction assumption)}\\
    = (\, (R\wedge S) \vee (R\wedge T), R\wedge S\,)_{k+1} & \text{($(R\!\wedge\! S)\!\vee\! (R\!\wedge\! T)$ transitive)}\\
    = ( (R\wedge S)\vee (R\wedge T))\vee (R\wedge S) & \text{($\,(k+1)$-permutability)}\\
    = (R\wedge S)\vee (R\wedge T).
\end{array}
$$
\end{proof}

\begin{proposition} \label{proposition: n-permutable and J_n}
Let $\C$ be an $n$-permutable category ($n\geqslant 2$). Then $\C$ is equivalence distributive if and only $\C$ is a J\'onsson category (of order $n-1$).
\end{proposition}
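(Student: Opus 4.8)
The plan is to prove the two implications separately, exploiting the fact that $n$-permutability gives us joins of equivalence relations via formula~\eqref{join in n-permutable cat}.

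\textbf{($\Leftarrow$) Assume $\C$ is a J\'onsson category of order $n-1$.} I would show $\Eq(A)$ is distributive by verifying the inequality $R\wedge (S\vee T)\leqslant (R\wedge S)\vee (R\wedge T)$ for arbitrary $R,S,T\in\Eq(A)$ (the reverse inequality always holds in a lattice). Since $\C$ is $n$-permutable, $S\vee T = (S,T)_n$, so the left-hand side is exactly $R\wedge (S,T)_n$. But this is precisely the situation handled by Proposition~\ref{proposition: n-permutable+Jonsson implies ...} (with $j=n$), which gives $R\wedge (S,T)_n\leqslant (R\wedge S)\vee (R\wedge T)$ directly. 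So this direction is essentially a one-line appeal to the previous proposition.

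\textbf{($\Rightarrow$) Assume $\C$ is $n$-permutable and equivalence distributive.} I need to produce the J\'onsson inequality~\eqref{Jonsson pp} of order $n-1$, i.e. $R\wedge (S\circ T)\leqslant (R\wedge S, R\wedge T)_n$ for all $R,S,T\in\Eq(A)$. The right-hand side, being an $n$-fold alternating composite of the equivalence relations $R\wedge S$ and $R\wedge T$ in an $n$-permutable category, equals their join $(R\wedge S)\vee(R\wedge T)$ by~\eqref{join in n-permutable cat}. On the left, $S\circ T\leqslant S\vee T$ always, so $R\wedge (S\circ T)\leqslant R\wedge (S\vee T)$, and distributivity~\eqref{distributivity} rewrites the latter as $(R\wedge S)\vee (R\wedge T)$. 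Chaining these gives $R\wedge(S\circ T)\leqslant (R\wedge S)\vee (R\wedge T) = (R\wedge S, R\wedge T)_n$, which is exactly~\eqref{Jonsson pp} with the order equal to $n-1$. Hence $\C$ is a J\'onsson category of order $n-1$.

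I do not anticipate a serious obstacle here: the content has already been front-loaded into Proposition~\ref{proposition: n-permutable+Jonsson implies ...} and the characterisation of joins in $n$-permutable categories. The only point requiring a moment's care is bookkeeping the indices — confirming that the relevant alternating composite has exactly $n$ terms so that the order of the J\'onsson condition comes out as $n-1$ rather than $n$ or $n-2$ — and noting that in an $n$-permutable category every $(R',T')_n$ built from equivalence relations $R',T'$ is an equivalence relation (hence equals the join), which is what licenses replacing $(R\wedge S, R\wedge T)_n$ by $(R\wedge S)\vee(R\wedge T)$ in both directions.
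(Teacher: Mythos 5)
Your proof is correct and follows essentially the same route as the paper's: the ($\Leftarrow$) direction is the same one-line appeal to Proposition~\ref{proposition: n-permutable+Jonsson implies ...} combined with the join formula~\eqref{join in n-permutable cat}, and the ($\Rightarrow$) direction is the same chain $R\wedge(S\circ T)\leqslant R\wedge(S\vee T)=(R\wedge S)\vee(R\wedge T)=(R\wedge S,R\wedge T)_n$, with the index bookkeeping handled correctly so that the order comes out as $n-1$.
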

\begin{proof}
Note that for $n=2$, the statement claims the known fact: a regular Mal'tsev category is equivalence distributive if and only if it is a majority category (Corollary 3.2 in~\cite{Hoefnagel2020a}).

Let $R,S,T\in \Eq(A)$, for some object $A$ of $\C$. Suppose that $\C$ is a J\'onsson category (of any order, in this implication). By Proposition~\ref{proposition: n-permutable+Jonsson implies ...} we get
\[
    R\wedge (S\vee T) \stackrel{\eqref{join in n-permutable cat}}{=} R\wedge (S,T)_n \leqslant (R\wedge S)\vee (R\wedge T).
\]

Conversely, if $\C$ is equivalence distributive, then
\[
R\wedge (S\circ T) \leqslant R \wedge (S,T)_n \stackrel{\eqref{join in n-permutable cat}}{=} R\wedge (S\vee T) \stackrel{\eqref{distributivity}}{=} (R \wedge S)\vee (R\wedge T) \stackrel{\eqref{join in n-permutable cat}}{=} (R\wedge S,R\wedge T)_n.
\]
By Definition~\ref{definition: Jonsson Category}, $\C$ is a J\'onsson category of order $n-1$.
\end{proof}


\begin{proposition}
\label{J_n-closed implies trapezoid lemma}
If $\C$ is a J\'onsson category of any order, then $\C$ satisfies the trapezoid lemma. 
\end{proposition}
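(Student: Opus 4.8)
The plan is to verify the regular-category reformulation \eqref{TpL} of the trapezoid lemma. So fix $n$ with $\C$ a J\'onsson category of order $n$, take equivalence relations $R,S,T$ on an object $A$ with $R\wedge S\leqslant T$, and aim to prove $R\wedge(S\circ T\circ S)\leqslant T$.

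The first step is a pair of ``one-sided'' observations. Applying Definition~\ref{definition: Jonsson Category} to the triple of equivalence relations $R,T,S$ (in the roles of $R,S,T$), and then to $R,S,T$ themselves, gives
\[
R\wedge(T\circ S)\leqslant(R\wedge T,R\wedge S)_{n+1},\qquad R\wedge(S\circ T)\leqslant(R\wedge S,R\wedge T)_{n+1}.
\]
Since $R\wedge T\leqslant T$ and $R\wedge S\leqslant T$, and $T$ is an equivalence relation --- so that $(T,T)_{m}=T$ for every $m\geqslant 1$ --- monotonicity of relational composition yields $R\wedge(T\circ S)\leqslant T$ and $R\wedge(S\circ T)\leqslant T$. (This is just the usual ``two-step'' reason why a J\'onsson category satisfies the trapezoid lemma: every factor of a J\'onsson chain with terms $R\wedge S$ and $R\wedge T$ already lies below $T$.)

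The second step isolates the outer copy of $S$. Writing $S\circ T\circ S=S\circ(T\circ S)$ and applying part~(ii) of Theorem~\ref{theorem: main theorem} to the equivalence relation $R$ and the reflexive relations $S$ and $T\circ S$, we obtain
\[
R\wedge(S\circ T\circ S)\leqslant\bigl((R\wedge S^{\op})\circ(R\wedge S),\ (R\wedge(T\circ S))\circ(R\wedge(T\circ S)^{\op})\bigr)_{n+1}.
\]
Now $S^{\op}=S$ and $R\wedge S$ is an equivalence relation, so the first building block equals $R\wedge S\leqslant T$; and $(T\circ S)^{\op}=S\circ T$, so by the first step the second building block is $\leqslant T\circ T=T$. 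Hence the right-hand side is contained in $(T,T)_{n+1}=T$, which is exactly \eqref{TpL}; by the reformulation of the trapezoid lemma recorded after \eqref{TpL}, this shows that $\C$ satisfies the trapezoid lemma.

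I do not expect a real obstacle here: the argument is a short computation in the relational calculus of regular categories. The one genuine idea is to absorb $T\circ S$ into a single reflexive relation so that Theorem~\ref{theorem: main theorem}(ii) --- which is designed precisely for reflexive, not necessarily symmetric or transitive, relations --- becomes applicable; the remainder is bookkeeping with $(-)^{\op}$, with the identities $E\circ E=E$ for equivalence relations $E$, and with monotonicity of composition.
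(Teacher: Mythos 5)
Your proof is correct and follows essentially the same route as the paper's: first derive $R\wedge(S\circ T)\leqslant T$ and $R\wedge(T\circ S)\leqslant T$ from the Jónsson inequality, then apply Theorem~\ref{theorem: main theorem}(ii) to the reflexive relations obtained by splitting $S\circ T\circ S$ and bound everything by $(T,T)_{n+1}=T$. The only difference is the inessential choice of bracketing, $S\circ(T\circ S)$ in your argument versus $(S\circ T)\circ S$ in the paper.
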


\begin{proof}
Suppose that $\C$ is a J\'onsson category of order $n$. Let $R,S,T\in\Eq(A)$, for some object $A$ of $\C$, such that $R \wedge S \leqslant T$. We may apply \eqref{Jonsson pp} to get that
\[
R\wedge (S\circ T) \leqslant (R \wedge S, R \wedge T)_{n + 1} \leqslant T,
\]
the last inequality following from the property that both $R \wedge S$ and $R \wedge T$ are contained in $T$. Similarly, we have that $R \wedge (T\circ S) \leqslant T$. We prove \eqref{TpL} 
\[
\begin{array}{ll}
    R \wedge ((S\circ T)\circ S)  \\
    \leqslant  ((R \wedge (T\circ S))\circ (R \wedge (S\circ T)), R \wedge S)_{n+1}\;\;\;\;\;\; & \text{(by \eqref{Jonsson pp for vars}, $(S\circ T)^\op=T\circ S$, $S^\op=S$)}\\
    \leqslant T.\\ 
\end{array}
\]
\end{proof}

Recall from~\cite{Gumm1983} that a variety of universal algebras $\V$ satisfies the \emph{shifting lemma} if for every three congruences $R,S,T$ on any algebra $A$ in $\V$ the a diagrammatic condition similar to that of \eqref{trapezoid lemma} holds, where $uTv$ is replaced with $u(R\wedge T)v$. Finitely complete categories satisfying (the categorical version of) the shifting lemma are called \emph{Gumm} categories, and were introduced in \cite{BournGran2004}. Any regular Mal'tsev or Goursat category satisfy the shifting lemma~\cite{CarboniLambekPedicchio1991, CarboniKellyPedicchio1993}, so they are examples of Gumm categories. Furthermore, it is shown in~\cite{GranRodeloNguefeu2019} that regular Mal'tsev or Goursat categories can be characterised by stronger variations of the shifting lemma. The shifting lemma is  implied by the trapezoid lemma (even for finitely complete categories), so that every J\'onsson category $\C$ is necessarily a Gumm category (by Proposition~\ref{J_n-closed implies trapezoid lemma}). It then follows from Proposition~2.12 in \cite{Hoefnagel2019b} that binary products commute with coequalisers locally (in the sense of \cite{Hoefnagel2019b}) in $\C$, and that $\C$ is locally anticommutative in the sense of \cite{Hoefnagel2021} (see Corollary~2.18 and Remark~2.17 of \cite{Hoefnagel2021}). 

\begin{remark}\label{Gumm not imply Jonsson}
There are examples of Gumm categories that are not J\'onsson categories. Indeed, any regular Mal'tsev category which is not equivalence distributive, is not a J\'onsson category of any order. This follows from the proof of Proposition~\ref{proposition: n-permutable and J_n}. As examples, we have the (quasi)varieties $\mathsf{Ab}$ of abelian groups, $\Grp$ of groups or $\mathsf{Rng}$ of rings.
\end{remark}

\begin{definition}
Let $A$ be an object in a category $\C$, then a \emph{factor relation} $F$ on $A$ is any equivalence relation on $A$ of the form $\Eq(p)$ where $X \xleftarrow{p} A \xrightarrow{p'} Y$ is a product diagram. The \emph{factor relation} $F' = \Eq(p')$ will be called a \emph{complementary} factor relation of $F$.
\end{definition}

\begin{proposition} \label{proposition: factor-relation formula}
For any J\'onsson category $\C$ of any order, given $F,S,T \in \Eq(A)$ where $F$ is a factor relation, we have 
\[
(F\circ S) \wedge (F\circ T) = F \circ(S \wedge T).
\]
\end{proposition}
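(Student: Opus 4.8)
The plan is to prove the two inclusions separately. The inclusion $F \circ (S \wedge T) \leqslant (F\circ S)\wedge (F\circ T)$ is the easy half: since $S\wedge T \leqslant S$ and $S\wedge T \leqslant T$, composing on the left with $F$ gives $F\circ(S\wedge T) \leqslant F\circ S$ and $F\circ(S\wedge T)\leqslant F\circ T$ simultaneously, hence the claim. This needs no hypothesis beyond monotonicity of composition with respect to $\leqslant$, which holds in any regular category.

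The substance is the reverse inclusion $(F\circ S)\wedge (F\circ T)\leqslant F\circ(S\wedge T)$. Let $F' = \Eq(p')$ be a complementary factor relation of $F = \Eq(p)$, so that $F \wedge F' = \Delta_A$ and $F\circ F' = \nabla_A$, and recall that $F$ and $F'$ permute (this is the standard fact about product projections, also used in the proof of Proposition~\ref{proposition: trapezoid implies factor permutable}; alternatively it follows since a J\'onsson category satisfies the trapezoid lemma, hence is factor permutable by Proposition~\ref{proposition: trapezoid implies factor permutable}). The idea is to take elements $x,y$ with $x\,(F\circ S)\,y$ and $x\,(F\circ T)\,y$ and produce a factorisation $x \mathrel{F} z$ and $z\,(S\wedge T)\,y$. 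Using Proposition~\ref{proposition: relation-composition} we may, after passing to a regular epimorphic cover of the domain, assume there are morphisms $a$ and $b$ with $x\mathrel{F}a$, $a\mathrel{S}y$, $x\mathrel{F}b$, $b\mathrel{T}y$. Then $a\mathrel{F}b$ (since $F$ is an equivalence relation and both are $F$-related to $x$), so it suffices to ``shift'' $b$ back along $F$ to a common point $S$-related to $y$. This is exactly where I expect to invoke the trapezoid lemma / J\'onsson property.

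Concretely, I would set up the relevant equivalence relations so that the J\'onsson inequality \eqref{Jonsson pp} (or the trapezoid lemma \eqref{TpL}) can be applied. For instance, one can argue: $x\,(F\circ S)\,y$ and $x\,(F\circ T)\,y$ give $y\,(S^\op\circ F)\,x\,(F\circ T)\,y$, i.e.\ $y\,(S\circ F\circ T)\,y$ by symmetry of $F$, and we want to conclude $y\,(S\wedge T \;\text{composed with}\; F)\,y$ in the right form. The cleanest route is probably: consider the equivalence relation $F$, the equivalence relation $F\vee S$ and the equivalence relation $F\vee T$, or work directly with $F$, $S$, and $F'$ and use that $F' \wedge (F\circ S) = F'\wedge \nabla_A \wedge (F\circ S)$ collapses appropriately. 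Another candidate: apply Proposition~\ref{proposition: n-permutable+Jonsson implies ...}-style reasoning is unavailable here since $\C$ need not be $n$-permutable, so I must stay with \eqref{Jonsson pp} and \eqref{Jonsson pp for vars} directly.

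The main obstacle will be choosing the three equivalence relations to which \eqref{Jonsson pp} (or \eqref{Jonsson pp for vars}) is applied so that the resulting $(n{+}1)$-fold composite $(R\wedge S, R\wedge T)_{n+1}$ actually simplifies back to a single composite $F\circ(S\wedge T)$ rather than an uncontrolled alternating chain. The key structural input that makes this possible is the very strong nature of a factor relation: for any equivalence relation $E$, one has $F\circ E = F\circ E\circ F$ is already transitive (indeed $F\circ E$ is an equivalence relation when $F$ is a factor relation, by factor permutability together with $F\circ F = F$), so composites involving $F$ absorb and collapse. I would exploit this to show that in the chain $(F\wedge(F\circ S),\, F\wedge(F\circ T))_{n+1}$ — or whatever the correct choice of $R,S,T$ turns out to be — each block telescopes, leaving $F\circ(S\wedge T)$. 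Verifying that telescoping carefully, keeping track of opposites and of which factor $S$ versus $T$ appears in each slot of the alternating composite, is the technical heart of the argument; everything else is formal manipulation with Proposition~\ref{proposition: relation-composition}.
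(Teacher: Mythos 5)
Your easy inclusion and your inventory of auxiliary facts (factor permutability, hence $F\circ S$ and $F\circ T$ are equivalence relations, and composites of $F$ with an equivalence relation collapse) are all correct and are indeed the ingredients the paper uses. But the proposal stops exactly at the decisive point: you never commit to which three equivalence relations the J\'onsson inequality \eqref{Jonsson pp} is applied to, and you say so yourself (``or whatever the correct choice of $R,S,T$ turns out to be''). Moreover, the one concrete candidate you float, $\bigl(F\wedge(F\circ S),\,F\wedge(F\circ T)\bigr)_{n+1}$, cannot be the right chain: since $F\leqslant F\circ S$ and $F\leqslant F\circ T$, it collapses to $F$ itself, which is not an upper bound for $(F\circ S)\wedge(F\circ T)$ in general, and it does not arise from any legitimate instance of \eqref{Jonsson pp} applied to that meet (in \eqref{Jonsson pp} the left-hand side is $R\wedge(S\circ T)$, not a meet of two composites sharing $F$). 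The element-chasing reduction to ``given $x\,F\,a$, $a\,S\,y$, $x\,F\,b$, $b\,T\,y$, find $z$ with $x\,F\,z$ and $z\,(S\wedge T)\,y$'' is only a restatement of the goal, and it is not clear that the trapezoid lemma \eqref{TpL} alone delivers such a $z$; the route via joins $F\vee S$, $F\vee T$ is also not developed.

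For comparison, the missing idea in the paper's proof is a \emph{double} application of \eqref{Jonsson pp}: first with the equivalence relation $F\circ S$ (an equivalence relation by factor permutability) in the role of $R$ and with $F$, $T$ in the other two slots, giving $(F\circ S)\wedge(F\circ T)\leqslant \bigl((F\circ S)\wedge F,\,(F\circ S)\wedge T\bigr)_{n+1}$; then \eqref{Jonsson pp} is applied again to the term $(F\circ S)\wedge T$ (now with $T$ as the meetand), yielding $(F\circ S)\wedge T\leqslant (F\wedge T,\,S\wedge T)_{n+1}\leqslant (F,\,S\wedge T)_{n+1}$, while $(F\circ S)\wedge F\leqslant F$. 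The nested chain $\bigl(F,\,(F,\,S\wedge T)_{n+1}\bigr)_{n+1}$ then collapses to $F\circ(S\wedge T)$ because $F$ permutes with $S\wedge T$ (factor permutability) and $F$, $S\wedge T$ are transitive --- this is the telescoping you anticipated, but it happens only after the correct choice of relations has produced a chain whose entries are $F$ and $S\wedge T$, not before. So the proposal is a reasonable plan with the right supporting lemmas, but the core step of the argument is genuinely absent.
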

\begin{proof}
Suppose that $\C$ is J\'onsson of order $n$ for some $n \geqslant 1$. Note that since $\C$ is factor permutable, the relation $F \circ S$ is an equivalence relation, so that we have
\[\begin{array}{ll}
	(F\circ S) \wedge (F\circ T) \\
	\leqslant ((F\circ S) \wedge F, (F\circ S) \wedge T)_{n+1} \;\;\;\;\;\; & \text{(by \eqref{Jonsson pp})}\\ 
	\leqslant (F, (F\wedge T, S \wedge T)_{n+1})_{n+1} & \text{($(F\circ S)\wedge F \leqslant F$, \eqref{Jonsson pp} applied to $(F\circ 	S)\wedge T$)}\\
	\leqslant (F, (F, S \wedge T)_{n+1})_{n+1} & \text{($F\wedge T\leqslant F$)}\\
	= F\circ (S \wedge T), & \text{($F\circ (S\wedge T)=(S\wedge T)\circ F$ and $F$ is transitive)}\\
\end{array}
\]
The inclusion $F \circ (S \wedge T) \leqslant (F\circ S) \wedge (F \circ T)$ is trivial. 
\end{proof}

\begin{proposition}
If $\C$ is a Barr-exact J\'onsson category of any order, then the set of factor relations $\F(A)$, for any object $A$ of $\C$, forms a Boolean algebra under $\wedge$ and $\circ$. 
\end{proposition}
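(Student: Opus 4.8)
The plan is to first pin down a workable description of factor relations in the Barr-exact setting, and then to transcribe the classical argument showing that the factor congruences of an algebra in a congruence distributive variety form a Boolean sublattice of the congruence lattice. In that classical argument one uses the distributive law of the congruence lattice repeatedly; here $\Eq(A)$ need not even be a lattice, so its role will be played by Proposition~\ref{proposition: factor-relation formula} (the substitute for the identity $(a\wedge b)\vee c=(a\vee c)\wedge(b\vee c)$) together with factor permutability (Propositions~\ref{J_n-closed implies trapezoid lemma} and~\ref{proposition: trapezoid implies factor permutable}), while the genuine distributivity of $\F(A)$ will come from the J\'onsson property~\eqref{Jonsson pp} itself. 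Barr-exactness enters only in the characterisation lemma below, where it supplies effective quotients.

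\textbf{Step 1: characterising factor relations.} I would prove that, in a Barr-exact category, an equivalence relation $E$ on $A$ is a factor relation if and only if there is an equivalence relation $E'$ on $A$ with $E\wedge E'=\Delta_A$ and $E\circ E'=\nabla_A$; in that case $E'$ is again a factor relation and $E$, $E'$ permute (indeed $E\circ E'=\nabla_A$ forces $E'\circ E=\nabla_A$ on taking opposites). The forward direction is immediate: if $E=\Eq(p)$ for a product diagram $X\xleftarrow{p}A\xrightarrow{p'}Y$, take $E'=\Eq(p')$; then $E\wedge E'=\Eq(\langle p,p'\rangle)=\Delta_A$, and $E\circ E'=\nabla_A$ by Proposition~\ref{proposition: relation-composition}. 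For the converse, use Barr-exactness to write $E=\Eq(q)$, $E'=\Eq(q')$ with $q\colon A\repi A/E$ and $q'\colon A\repi A/E'$ the quotient maps. The morphism $\langle q,q'\rangle\colon A\to A/E\times A/E'$ has kernel pair $E\wedge E'=\Delta_A$, so it is a monomorphism; and it is a regular epimorphism, since if $\theta\colon P\to A\times A$ is the canonical morphism out of the pullback $P$ used to compute $E\circ E'$, then $\theta$ is a regular epimorphism (its image is $E\circ E'=\nabla_A$), while $(q\times q')\circ\theta=\langle q,q'\rangle\circ\pi$ for the obvious projection $\pi\colon P\to A$, and the left-hand side is a regular epimorphism. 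Hence $\langle q,q'\rangle$ is an isomorphism and $E=\Eq(q)$ is a factor relation. In particular $\Delta_A=\Eq(1_A)$ and $\nabla_A=\Eq(A\to 1)$ belong to $\F(A)$ (from the product decompositions $A\cong A\times 1$ and $A\cong 1\times A$), for every $F\in\F(A)$ the complementary factor relation $F'$ lies in $\F(A)$ with $F\wedge F'=\Delta_A$ and $F\circ F'=F'\circ F=\nabla_A$, and every factor relation permutes with every equivalence relation on $A$ by factor permutability.

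\textbf{Step 2: closure under $\wedge$ and $\circ$.} Assume $\C$ is J\'onsson of order $n$, and let $F,G\in\F(A)$ with complementary factor relations $F',G'$. I claim $F\wedge G\in\F(A)$, with complement $F'\circ G'$ (an equivalence relation, since $F'$ and $G'$ permute). On one hand, applying~\eqref{Jonsson pp} to $F\wedge G$, $F'$, $G'$, and using $(F\wedge G)\wedge F'=(F\wedge F')\wedge G=\Delta_A$ and likewise $(F\wedge G)\wedge G'=\Delta_A$,
\[
(F\wedge G)\wedge(F'\circ G')\leqslant\bigl((F\wedge G)\wedge F',\ (F\wedge G)\wedge G'\bigr)_{n+1}=(\Delta_A,\Delta_A)_{n+1}=\Delta_A .
\]
On the other hand, factor permutability together with Proposition~\ref{proposition: factor-relation formula} applied to $F'$ give $(F\wedge G)\circ F'=F'\circ(F\wedge G)=(F'\circ F)\wedge(F'\circ G)=\nabla_A\wedge(F'\circ G)=F'\circ G$, whence $(F\wedge G)\circ(F'\circ G')=(F'\circ G)\circ G'=F'\circ(G\circ G')=\nabla_A$. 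By Step~1, $F\wedge G\in\F(A)$. Applying the same reasoning to the factor relations $F'$ and $G'$ (with complements $F$ and $G$) gives $F'\wedge G'\in\F(A)$ with complement $F\circ G$, i.e. $(F\circ G)\wedge(F'\wedge G')=\Delta_A$ and $(F\circ G)\circ(F'\wedge G')=\nabla_A$, so Step~1 yields $F\circ G\in\F(A)$. Thus $\F(A)$ is closed under $\wedge$ and $\circ$. Since for $F,G\in\F(A)$ the equivalence relation $F\circ G$ is the join of $F$ and $G$ in $\Eq(A)$ (any equivalence relation above both contains $F\circ G$) and $F\wedge G$ is their meet, while $\Delta_A$ and $\nabla_A$ are the least and greatest elements, $(\F(A),\wedge,\circ)$ is a bounded lattice in which every $F$ has the complement $F'$.

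\textbf{Step 3: distributivity and conclusion.} For $F,G,H\in\F(A)$, the relations $F\wedge G$ and $F\wedge H$ are factor relations by Step~2, hence permute, so the alternating composite $(F\wedge G,F\wedge H)_{n+1}$ collapses to $(F\wedge G)\circ(F\wedge H)$; then~\eqref{Jonsson pp} gives $F\wedge(G\circ H)\leqslant(F\wedge G)\circ(F\wedge H)$, and the reverse inequality is immediate from $(F\wedge G)\circ(F\wedge H)\leqslant(F\circ F)\wedge(G\circ H)=F\wedge(G\circ H)$. Hence $\F(A)$ is a distributive lattice; being bounded, distributive and complemented, it is a Boolean algebra (with complements then automatically unique). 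The main obstacle is Step~1, specifically the implication ``complemented $\Rightarrow$ factor relation'': this is exactly where mere regularity is not enough and Barr-exactness must be invoked, to ensure that $A/E$ and $A/E'$ exist as effective quotients and that $\langle q,q'\rangle$ has kernel pair $\Delta_A$. Once this bridge between ``possessing a complement'' and ``being a factor relation'' is in place, Steps~2 and~3 are a fairly mechanical transcription of the varietal argument, with Proposition~\ref{proposition: factor-relation formula} standing in for the distributive law and~\eqref{Jonsson pp} supplying the actual distributivity.
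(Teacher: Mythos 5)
Your proof is correct and follows essentially the same route as the paper: the core computation showing that $F\wedge G$ and $F'\circ G'$ satisfy $(F\wedge G)\wedge(F'\circ G')=\Delta_A$ via \eqref{Jonsson pp} and $(F\wedge G)\circ(F'\circ G')=\nabla_A$ via Proposition~\ref{proposition: factor-relation formula} is exactly the paper's argument. Your Step~1 (in a Barr-exact category an equivalence relation with a complement in the sense $E\wedge E'=\Delta_A$, $E\circ E'=\nabla_A$ is a factor relation) and your direct derivation of distributivity from \eqref{Jonsson pp} merely make explicit steps the paper leaves implicit, and both are correctly carried out.
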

\begin{proof}
Let $\C$ be J\'onsson of order $n\geqslant 1$ and suppose that $F, G \in \F(A)$. Then we show that $F\wedge G$ and $F'\circ G'$ are complementary factor relations. By \eqref{Jonsson pp}, we have
\[
(F\wedge G) \wedge (F' \circ G') \leqslant (F \wedge G \wedge F', F \wedge G \wedge G')_{n+1} \leqslant (F\wedge F', G\wedge G')_{n+1}= \Delta_A,
\]
since $F\wedge F'=\Delta_A=G\wedge G'$, as complementary factor relations.
Moreover, applying Proposition~\ref{proposition: factor-relation formula} we have
\[
	(F \wedge G) \circ (F' \circ G') 
	= ((F\wedge G)\circ F') \circ G'
	= ((F \circ F') \wedge (G \circ F'))  \circ G'
	= G \circ F'  \circ G'  =
	\nabla_A,
\]
since $F\circ F'=\nabla_A=G\circ G'$, as complementary factor relations. Therefore we have that $F\wedge G$ and $F'\circ G'$ complementary factor relations, so that $\F(A)$ is closed under $\wedge$ and $\circ$, and then by Proposition~\ref{proposition: factor-relation formula} it is a complemented distributive lattice under these terms. 
\end{proof}
\begin{remark}
 The above proposition shows that in any Barr-exact J\'onsson category, every object with global support is \emph{projection coextensive} in the sense of \cite{Hoefnagel2020b}.   
\end{remark} 

\subsection{Examples of J\'onsson categories}

Any regular majority category is a J\'onsson category of order 1 (see~\cite{Hoefnagel2019a} for details). We also know that a regular Mal'tsev (=$2$-permutable) category is a majority category if and only if it is equivalence distributive (Proposition~\ref{proposition: n-permutable and J_n} for $n=2$). We have the following list of examples, where we also add whether the category is a Mal'tsev category or an equivalence distributive category, or not, when such is known. Note that, in the varietal context equivalence distributive is usually called \emph{congruence distributive}. Also, a Mal'tsev and congruence distributive variety is called  an \emph{arithmetical variety} and a Barr-exact Mal'tsev equivalence distributive category is called an \emph{arithmetical category}~\cite{Bourn2001} (dropping the existence of coequalisers given in the original definition~\cite{Pedicchio1996}).

\noindent Varietal examples: 
\begin{itemize}
	\item $\mathsf{Lat}$, the variety of lattices (it is not a Mal'tsev category, but it is congruence distributive);
	\item $\mathsf{lGrp}$, the variety of lattice-ordered groups (it is an arithmetical variety~\cite{Birkhoff1942});
	\item $\mathsf{Heyt}$, the variety of Heyting algebras (it is an arithmetical variety);
	\item $\mathsf{Bool}$, the variety of Boolean algebras (it is an arithmetical variety);
	\item any Mal'tsev variety equipped with a $k$-ary near unanimity term, $k\geqslant 1$ (it is an arithmetical variety~\cite{Mitchke1978}). 
\end{itemize}

\noindent Non-varietal examples: 
\begin{itemize}
	\item any quasivariety of those listed above;
	\item the variety of distributive lattices equipped with an additional term of countable arity (it is not Mal'tsev nor an equivalence distributive category~\cite{GJanelidze2016});
	\item $\mathsf{((pre)topos)^{op}}$, the dual of any (pre)topos (it is an arithmetical category~\cite{CarboniKellyPedicchio1993, Pedicchio1995});
	\item $\mathsf{Top^{op}}$, the dual of the category of topological spaces (it is not a Mal'tsev category~\cite{Weighill2017});
	\item $\mathsf{(Met_{\infty})^{\op}}$, the dual of the category of (extended) metric spaces (it is not a Mal'tsev category~\cite{Weighill2017});
	\item $\mathsf{NReg}$, the category of von Neumann regular rings (it is an arithmetical category~\cite{Hoefnagel2019a});
	\item $\mathsf{NReg(Top)}$, the category of topological von Neumann regular rings (it is a regular Mal'tsev and congruence distributive category~\cite{Hoefnagel2020a});
	\item $\mathsf{Bool(Top)}$, the category of topological Boolean rings (it is a regular Mal'tsev and congruence distributive category~\cite{Hoefnagel2020a});
	\item  the category of topological lattices~\cite{Hoefnagel2019a}.
\end{itemize}

A Goursat (i.e. $3$-permutable) category is a J\'onsson category of order 2 (but not of order 1) if and only if it is equivalence distributive (Proposition~\ref{proposition: n-permutable and J_n} for $n=3$). For example, the variety $\textbf{Imp}$ of implication algebras~\cite{Mitschke1971} since it is known that such a variety is $3$-permutable (not $2$-permutable) and congruence distributive. As shown in \cite{Mitchke1978} every variety which admits a $k$-ary near unanimity term~\cite{Mitschke1978} will be an example of a J\'onsson category of order $2k-5$ (see~\cite{Mitchke1978}).

Finally, if $\C$ is a J\'onsson category of order $n$ and $X$ is an object in $\C$, then so are the comma categories $\C/X$, $X/\C$, the category $\mathsf{Pt}_X(\C)$ of points over $X$~\cite{Bourn1996} or the functor category $\C^{\D}$, for any category $\D$ (see~\cite{Hoefnagel2019a} for details when $n=1$; similar arguments hold for $n\geqslant 2$).


\subsection{Proof of Theorem~\ref{theorem: main theorem}}\label{section: proof of main thm}
We prove the implication (i) $\Rightarrow$ (ii) of Theorem~\ref{theorem: main theorem} when $n\geqslant 2$.

Let $a,c\colon X\to A$ be morphisms such that $(a,c)\in_X R\wedge (S\circ T)$. From $(a,c)\in_X S\circ T$ there exists a regular epimorphism $e_1\colon E_1\repi X$ and a morphism $b\colon E_1\to A$ such that $(ae_1,b)\in_{E_1} S$ and $(b,ce_1)\in_{E_1} T$ and also $(ae_1,ce_1)\in_{E_1} R$. We define a ternary relation $D$ on $A$ as the image $\rho(W)$, where $W$ is the quaternary relation on $A$ represented by $w$ as in the diagram
$$
\xymatrix@C=80pt@R=30pt{
    D \ar@{ >->}[d]_-d &  & W \ar[r] \ar@{ >->}[d] \ar@{ >->}[dl]_-w \pb \ar@{>>}[ll]_-p & S\times R_0\times T \ar@{ >->}[d]^-{s\times r\times (t_2,t_1)} \\
    A^3 & A^4 \ar[l]^-{\rho=(\pi_1,\pi_3,\pi_2)} & R\times A^2 \ar[r]_-{\pi=(r_1\pi_1, \pi_3, \pi_2, \pi_3, r_2\pi_1, \pi_4)} \ar@{ >->}[l]^-{r\times 1_{A^2}}  & A^6. }
$$
Given $x,y,z,w\colon B\to A$, we have
$$
\begin{array}{lcl}
    (x,z,y,w)\in_B W & \Leftrightarrow & (x,z)\in_B R, (y,w)\in_B R, (x,w)\in_B S, (w,z)\in_B T,\\
    (x,y,z)\in_B D & \Leftrightarrow & \exists u\colon C \repi B, v\colon C\to A\;\;\mathrm{such\;\;that} \\
    & & (x,z)\in_B R, (yu,v)\in_C R, (xu,v)\in_C S, (v,zu)\in_C T.
\end{array}
$$

We have
\begin{itemize}
     \item $(ae_1,ae_1,ae_1)\in_{E_1} D$, since there exist $1_{E_1}\colon E_1\repi E_1$ and $ae_1\colon E_1\to A$ such that $(ae_1,ae_1)\in_{E_1} R,S,T$; 
     \item $(ae_1,b,ce_1)\in_{E_1} D$, since there exist $1_{E_1}\colon E_1\repi E_1$ and $b\colon E_1\to A$ such that $(ae_1,ce_1)$ $\in_{E_1} R$, $(b,b)\in_{E_1} R$, $(ae_1,b)\in_{E_1} S$, $(b,ce_1)\in_{E_1} T$;
     \item $(ce_1,ae_1,ce_1)\in_{E_1} D$, since there exist $1_{E_1}\colon E_1\repi E_1$ and $ce_1\colon E_1\to A$ such that $(ce_1,ce_1)\in_{E_1} R$, $(ae_1,ce_1)\in_{E_1} R$, $(ce_1,ce_1)\in_{E_1} S$, $(ce_1,ce_1)\in_{E_1} T$.
\end{itemize}

We consider the equivalence relations on $D$ defined by the kernel pairs of the projections $d_i\colon D\to A$: $D_1=\Eq(d_1)$, $D_2=\Eq(d_2)$ and $D_3=\Eq(d_3)$. We can apply our assumption to these equivalence relations
\[
	D_2\wedge (D_1\circ D_3)\leqslant (D_2\wedge D_1,D_2\wedge D_3)_{n+1}.
\]

We have, 
\[\begin{array}{l}
	((ae_1,ae_1,ae_1), (ce_1,ae_1,ce_1))\eot{E_1} D_2, \\
	((ae_1,ae_1,ae_1), (ae_1,b,ce_1))\eot{E_1} D_1 \;\;\text{and} \;\; ((ae_1,b,ce_1), (ce_1,ae_1,ce_1))\eot{E_1} D_3; \\
	\text{it follows that} 
	((ae_1,ae_1,ae_1), (ce_1,ae_1,ce_1))\eot{E_1} D_2\wedge (D_1\circ D_3). 
\end{array}
\]
From the inequality above, we conclude that 
\begin{equation}\label{aux}
	((ae_1,ae_1,ae_1), (ce_1,ae_1,ce_1))\eot{E_1}(D_2\wedge D_1,D_2\wedge D_3)_{n+1}.
\end{equation}
When $n$ is odd, the sequence \eqref{aux} ends with a $D_2\wedge D_3$. So,  there exists a regular epimorphism $e_2\colon E_2\repi E_1$ and morphisms $z_1$, $\cdots$, $z_{n-1}\colon E_2\to A$ such that the relations on the left part of the following table hold. Thus, there exist  $e_3\colon E_3\repi E_2$ and $d_1, \cdots, d_n\colon E_3\to A$ such that the relations on the right part of the following table hold
$$
\begin{array}{|c|l|ll|}
\hline
     1 & (ae_1e_2,ae_1e_2,z_2)\in_{E_2} D & (ae_1e_2,z_2)\in_{E_2} R & (ae_1e_2e_3,d_1)\in_{E_3} R \\
     && (ae_1e_2e_3,d_1)\in_{E_3} S & (d_1,z_2e_3)\in_{E_3} T\\
     2 & (z_1,ae_1e_2,z_2)\in_{E_2} D & (z_1,z_2)\in_{E_2} R & (ae_1e_2e_3,d_2)\in_{E_3} R \\
     && (z_1e_3,d_2)\in_{E_3} S & (d_2,z_2e_3)\in_{E_3} T\\
     3 & (z_1,ae_1e_2,z_4)\in_{E_2} D & (z_1,z_4)\in_{E_2} R & (ae_1e_2e_3,d_3)\in_{E_3} R\\ 
     && (z_1e_3,d_3)\in_{E_3} S & (d_3,z_4e_3)\in_{E_3} T\\
     \vdots & \vdots & \vdots & \vdots \\
     n-2 & (z_{n-4},ae_1e_2,z_{n-1})\in_{E_2} D & (z_{n-4},z_{n-1})\in_{E_2} R & (ae_1e_2e_3,d_{n-2})\in_{E_3} R\\
     && (z_{n-4}e_3,d_{n-2})\in_{E_3} S & (d_{n-2},z_{n-1}e_3)\in_{E_3} T \\
     n-1 & (z_{n-2},ae_1e_2,z_{n-1})\in_{E_2} D & (z_{n-2},z_{n-1})\in_{E_2} R & (ae_1e_2e_3,d_{n-1})\in_{E_3} R\\
     && (z_{n-2}e_3,d_{n-1})\in_{E_3} S & (d_{n-1},z_{n-1}e_3)\in_{E_3} T \\
     n & (z_{n-2},ae_1e_2,ce_1e_2)\in_{E_2} D & (z_{n-2},ce_1e_2)\in_{E_2} R & (ae_1e_2e_3,d_{n})\in_{E_3} R\\ 
     && (z_{n-2}e_3,d_{n})\in_{E_3} S & (d_{n},ce_1e_2e_3)\in_{E_3} T\\
\hline
\end{array}
$$

We may deduce the following relations from the right side of the table
\begin{itemize}
    \item[(a)] $(d_i,d_{i+1})\in_{E_3} R$, $\forall i\in\{1,\cdots, n-1\}$, because $(ae_1e_2e_3,d_i)\in_{E_3} R$, $\forall i\in\{1,\cdots, n\}$ and $R$ is symmetric and transitive;
    \item[(b)] $(d_i,z_{i+1}e_3)\in_{E_3} T$ (line $i$) and $(z_{i+1}e_3,d_{i+1})\in_{E_3} T^\op$ (line $i+1$), $\forall i\in\{1,\cdots, n-2\}$ and $i$ odd;
    \item[(c)] $(d_i,z_{i-1}e_3)\in_{E_3} S^\op$ (line $i$) and $(z_{i-1}e_3,d_{i+1})\in_{E_3} S$ (line $i+1$), $\forall i\in\{2,\cdots, n-1\}$ and $i$ even.
\end{itemize}
Using the symmetry and transitivity of $R$ we also get 
\begin{itemize}
    \item[(d)] $(d_i,z_{i+1}e_3)\in_{E_3} R$ and $(z_{i+1}e_3,d_{i+1})\in_{E_3} R$, $\forall i\in\{1,\cdots, n-2\}$ and $i$ odd. For example, when $i=1$, we get $(d_1,z_2e_3)\in_{E_3} R$ from the relations $(d_1,ae_1e_2e_3)\in_{E_3}R$ and $(ae_1e_2e_3,z_2e_3)\in_{E_3}R$ in the first line of the table above; also $(z_2e_3,d_2)\in_{E_3}R$ follows from $(z_2e_3,d_1)\in_{E_3}R$ (by the previously shown and symmetry of $R$) and $(d_1,d_2)\in_{E_3} R$ from (a);
    \item[(e)] $(d_i,z_{i-1}e_3)\in_{E_3} R$ and $(z_{i-1}e_3,d_{i+1})\in_{E_3} R$, $\forall i\in\{2,\cdots, n-1\}$ and $i$ even. For example, when $i=2$, we get $(d_2,z_1e_3)\in_{E_3} R$ from the relations $(d_2,z_2e_3)\in_{E_3}R$, when $i=1$, and $(z_2e_3,z_1e_3)\in_{E_3}R$ which follows from the second line of the table above; also $(z_1e_3,d_3)\in_{E_3}R$ follows from $(z_1e_3,d_2)\in_{E_3}R$ (by the previously shown and symmetry of $R$) and $(d_2,d_3)\in_{E_3} R$  from (a).
\end{itemize}

Note that (d) is equal to (b) by replacing $T$ with $R$ and (e) is equal to (c) by replacing $S$ with $R$. Combining the above we obtain the following relations (next we write $\cdots W \cdots$ to denote $(\cdots, \cdots)\in_{E_3} W$, for a relation $W$)
$$
\begin{array}{ccccccccc}
    ae_1e_2e_3 & (R\wedge S^\op) & ae_1e_2e_3 & (R\wedge S) & d_1 & (R\wedge T)&  z_2e_3 & (R\wedge T^\op) & d_2 \\
    d_2 & (R\wedge S^\op) & z_1e_3 & (R\wedge s) & 
    d_3 & (R\wedge T) & z_4e_3 & (R\wedge T^\op) & d_4\\
    \vdots \\
    d_{n-3} & (R\wedge S^\op) & z_{n-4}e_3 & (R\wedge S) & d_{n-2} & (R\wedge T) & z_{n-1}e_3 & (R\wedge T^\op) & d_{n-1} \\
    d_{n-1} & (R\wedge S^\op) & z_{n-2}e_3 & (R\wedge S) & d_{n} & (R\wedge T) & ce_1e_2e_3 & (R\wedge T^\op) & ce_1e_2e_3;
\end{array}
$$
so
$$
(ae_1e_2e_3,ce_1e_2e_3)\in_{E_3} ((R\wedge S^\op)\circ (R\wedge S), (R\wedge T)\circ (R\wedge T^\op)\,)_{n+1}.
$$ 
Since $e_1e_2e_3\colon E_3 \repi X$ is a regular epimorphism, we obtain 
$$
(a,c)\in_X ((R\wedge S^\op)\circ (R\wedge S), (R\wedge T)\circ (R\wedge T^\op)\,)_{n+1}.
$$

When $n$ is even ($n\geqslant 2$), the proof is similar except that the sequence in \eqref{aux} ends with a $D_2\wedge D_1$. Now we get the table
$$
\begin{array}{|c|l|ll|}
\hline
     1 & (ae_1e_2,ae_1e_2,z_2)\in_{E_2} D & (ae_1e_2,z_2)\in_{E_2} R & (ae_1e_2e_3,d_1)\in_{E_3} R \\
     && (ae_1e_2e_3,d_1)\in_{E_3} S & (d_1,z_2e_3)\in_{E_3} T\\
     2 & (z_1,ae_1e_2,z_2)\in_{E_2} D & (z_1,z_2)\in_{E_2} R & (ae_1e_2e_3,d_2)\in_{E_3} R \\
     && (z_1e_3,d_2)\in_{E_3} S & (d_2,z_2e_3)\in_{E_3} T\\
     3 & (z_1,ae_1e_2,z_4)\in_{E_2} D & (z_1,z_4)\in_{E_2} R & (ae_1e_2e_3,d_3)\in_{E_3} R\\ 
     && (z_1e_3,d_3)\in_{E_3} S & (d_3,z_4e_3)\in_{E_3} T\\
     \vdots & \vdots & \vdots & \vdots \\
     n-2 & (z_{n-3},ae_1e_2,z_{n-2})\in_{E_2} D & (z_{n-3},z_{n-2})\in_{E_2} R & (ae_1e_2e_3,d_{n-2})\in_{E_3} R\\
     && (z_{n-3}e_3,d_{n-2})\in_{E_3} S & (d_{n-2},z_{n-2}e_3)\in_{E_3} T \\
     n-1 & (z_{n-3},ae_1e_2,z_{n})\in_{E_2} D & (z_{n-3},z_{n})\in_{E_2} R & (ae_1e_2e_3,d_{n-1})\in_{E_3} R\\
     && (z_{n-3}e_3,d_{n-1})\in_{E_3} S & (d_{n-1},z_{n}e_3)\in_{E_3} T \\
     n & (ce_1e_2,ae_1e_2,z_n)\in_{E_2} D & (ce_1e_2,z_n)\in_{E_2} R & (ae_1e_2e_3,d_{n})\in_{E_3} R\\ && (ce_1e_2e_3,d_{n})\in_{E_3} S & (d_{n},z_ne_3)\in_{E_3} T\\
\hline
\end{array}
$$

Similarly, we may deduce
\begin{itemize}
    \item $(d_i,d_{i+1})\in_{E_3} R$, $\forall i\in\{1,\cdots, n-1\};$
    \item $(d_i,z_{i+1}e_3)\in_{E_3} (R\wedge T)$ and $(z_{i+1}e_3,d_{i+1})\in_{E_3} (R\wedge T^\op)$, $\forall i\in\{1,\cdots, n-1\}$ and $i$ odd;
    \item $(d_i,z_{i-1}e_3)\in_{E_3} (R\wedge S^\op)$ and $(z_{i-1}e_3,d_{i+1})\in_{E_3} (R\wedge S)$, $\forall i\in\{2,\cdots, n-2\}$ and $i$ even (these do not hold when $n=2$).
\end{itemize}

Combining the above we obtain the following relations
$$
\begin{array}{ccccccccc}
    ae_1e_2e_3 & (R\wedge S^\op) & ae_1e_2e_3 & (R\wedge S) & d_1 & (R\wedge T) & z_2e_3 & (R\wedge T^\op) & d_2 \\
    d_2 & (R\wedge S^\op) & z_1e_3 & (R\wedge s) & 
    d_3 & (R\wedge T) & z_4e_3 & (R\wedge T^\op) & d_4\\
    \vdots \\
    d_{n-2} & (R\wedge S^\op) & z_{n-3}e_3 & (R\wedge S) & d_{n-1} & (R\wedge T) & z_{n}e_3 & (R\wedge T^\op) & d_{n} \\
    d_{n} & (R\wedge S^\op) & ce_1e_2e_3 & (R\wedge S) & ce_1e_2e_3;
\end{array}
$$
so
$$
(ae_1e_2e_3,ce_1e_2e_3)\in_{E_3} ((R\wedge S^\op)\circ (R\wedge S), (R\wedge T)\circ (R\wedge T^\op)\,)_{n+1}.
$$ 
Since $e_1e_2e_3\colon E_3 \repi X$ is a regular epimorphism, we obtain 
$$
(a,c)\in_X ((R\wedge S^\op)\circ (R\wedge S), (R\wedge T)\circ (R\wedge T^\op)\,)_{n+1}.
$$
This completes the proof.

\begin{remark}
Part of the proof of Theorem~\ref{theorem: main theorem} follows the procedure introduced in~\cite{JanelidzeRodeloLinden2014}. In~\cite{JanelidzeRodeloLinden2014} the authors translate varietal proofs into categorical ones by using an appropriate matrix property corresponding to the ground categorical context.
\end{remark}

\bibliographystyle{plain}

\begin{thebibliography}{10}

\bibitem{BarrGrilletOsdol1971}
M.~Barr, P.~A. Grillet, and {D. H. van} Osdol.
\newblock Exact categories and categories of sheaves.
\newblock {\em Lecture Notes in Mathematics}, 236, 1971.

\bibitem{Birkhoff1942}
G.~Birkhoff.
\newblock Lattice-ordered groups.
\newblock {\em Annals of Mathematics}, 43(2):298--331, 1942.

\bibitem{Borceux1994b}
F.~Borceux.
\newblock {\em Handbook of Categorical Algebra 2}.
\newblock Encyclopedia of Mathematics and Its Applications. Cambridge
  University Press, 1994.

\bibitem{Bourn1996}
D.~Bourn.
\newblock Mal'cev categories and fibration of pointed objects.
\newblock {\em Applied Categorical Structures}, 4:307--327, 1996.

\bibitem{Bourn2001}
D.~Bourn.
\newblock A categorical genealogy for the congruence distributive property.
\newblock {\em Theory and Applications of Categories}, 8(14):391--407, 2001.

\bibitem{BournGran2004}
D.~Bourn and M.~Gran.
\newblock Categorical aspects of modularity.
\newblock {\em Fields Institute Communications}, 43:77--100, 2004.

\bibitem{CarboniKellyPedicchio1993}
A.~Carboni, G.~M. Kelly, and M.~C. Pedicchio.
\newblock Some remarks on {M}altsev and {G}oursat categories.
\newblock {\em Applied Categorical Structures}, 1(4):385--421, 1993.

\bibitem{CarboniLambekPedicchio1991}
A.~Carboni, J.~Lambek, and M.~C. Pedicchio.
\newblock Diagram chasing in {M}al'cev categories.
\newblock {\em Journal of Pure and Applied Algebra}, 69(3):271--284, 1991.

\bibitem{Chajda2003}
I.~Chajda, G.~Cz\'edli, and E.~K. Horv\'ath.
\newblock Trapezoid lemma and congruence distributivity.
\newblock {\em Mathematica Slovaca}, 53:247--253, 2002.

\bibitem{Duda2000}
J.~Duda.
\newblock The triangular principle for congruence distributive varieties.
\newblock {\em Abstract of a seminar lecture presented in Brno}, March, 2000.

\bibitem{FreydScendrov1990}
P.~Freyd and A.~Scendrov.
\newblock {\em Categories, Allegories}, volume~39 of {\em North-Holland
  Mathematical Library}.
\newblock North-Holland, 1990.

\bibitem{Gran2004}
M.~Gran.
\newblock Applications of categorical galois theory in universal algebra.
\newblock {\em Fields Institute Communications}, 43, 2004.

\bibitem{GranRodeloNguefeu2019}
M.~Gran, D.~Rodelo, and I.~Tchoffo Nguefeu.
\newblock Variations of the shifting lemma and goursat categories.
\newblock {\em Algebra Universalis}, 80(1), 2019.

\bibitem{GranRodeloNguefeu2020}
M.~Gran, D.~Rodelo, and I.~Tchoffo Nguefeu.
\newblock Facets of congruence distributivity in {G}oursat categories.
\newblock {\em Journal of Pure and Applied Algebra}, 224:106380, 2020.

\bibitem{Gumm1983}
H.~P. Gumm.
\newblock {\em Geometrical methods in congruence modular algebras}.
\newblock Number 286 in Memoirs of the American Mathematical Society. 1983.

\bibitem{Hoefnagel2019a}
M.~Hoefnagel.
\newblock Majority categories.
\newblock {\em Theory and Applications of Categories}, 34(10):249--268, 2019.

\bibitem{Hoefnagel2019b}
M.~Hoefnagel.
\newblock Products and coequalisers in pointed categories.
\newblock {\em Theory and Applications of Categories}, 34:1386--1400, 2019.

\bibitem{Hoefnagel2020a}
M.~Hoefnagel.
\newblock Characterizations of majority categories.
\newblock {\em Applied Categorical Structures}, 28(1):113--134, 2020.

\bibitem{Hoefnagel2020b}
M.~Hoefnagel.
\newblock $\mathcal{M}$-coextensive objects and the strict refinement property.
\newblock {\em Journal of Pure and Applied Algebra}, 224(106831), 2020.

\bibitem{Hoefnagel2021}
M.~Hoefnagel.
\newblock Anticommutativity and the triangular lemma.
\newblock {\em Algebra Universalis}, 82(19), 2021.

\bibitem{GJanelidze2016}
G.~Janelidze.
\newblock A history of selected topics in categorical algebra i: From galois
  theory to abstract commutators and internal groupoids.
\newblock {\em Categories and General Algebraic Structures with Applications},
  5(1):1--54, 2016.

\bibitem{ZJanelidze2008}
Z.~Janelidze.
\newblock Subtractive categories.
\newblock {\em Algebra Universalis}, 58(4):105--117, 2008.

\bibitem{JanelidzeRodeloLinden2014}
Z.~Janelidze, D.~Rodelo, and T.~Van der Linden.
\newblock Hagemann's theorem for regular categories.
\newblock {\em Journal of Homotopy and Related Sctructures}, 9(1):55--66, 2014.

\bibitem{Jonsson1967}
B.~J\'onsson.
\newblock Algebras whose congruence lattices are distributive.
\newblock {\em Mathematica Scandinavica}, 21:110--121, 1967.

\bibitem{MartinsRodeloLinden2014}
N.~Martins-Ferreira, D.~Rodelo, and T.~Van der Linden.
\newblock An observation on $n$-permutability.
\newblock {\em Bulletin of the Belgian Mathematical Society - Simon Stevin},
  21(2):223--230, 2014.

\bibitem{Mitschke1971}
A.~Mitschke.
\newblock Implication algebras are $3$-permutable and $3$-distributive.
\newblock {\em Algebra Universalis}, 1:182--186, 1971.

\bibitem{Mitchke1978}
A.~Mitschke.
\newblock Near unanimity identities and congruence distributivity in equational
  classes.
\newblock {\em Algebra Universalis}, 8(1):29--32, 1978.

\bibitem{Mitschke1978}
A.~Mitschke.
\newblock Near unanimity identities and congruence distributivity in equational
  classes.
\newblock {\em Algebra Universalis}, 8:29--32, 1978.

\bibitem{Pedicchio1995}
M.~C. Pedicchio.
\newblock Maltsev categories and maltsev operations.
\newblock {\em Journal of Pure and Applied Algebra}, 98:67--71, 1995.

\bibitem{Pedicchio1996}
M.~C. Pedicchio.
\newblock Arithmetical categories and commutator theory.
\newblock {\em Applied Categorical Structures}, 4:297--305, 1996.

\bibitem{Schmidt1972}
E.T. Schmidt.
\newblock On $n$-permutable equational classes.
\newblock {\em Acta Scientiarum Mathematicarum (Szeged)}, 33:29--30, 1972.

\bibitem{Snow1999}
J.W. Snow.
\newblock Maltsev conditions and relations on algebras.
\newblock {\em Algebra Univers.}, 42:299--309, 1999.

\bibitem{Weighill2017}
T.~Weighill.
\newblock Mal'tsev objects, {$R_1$}-spaces and ultrametric spaces.
\newblock {\em Theory and Applications of Categories}, 32(42):1485--1500, 2017.

\end{thebibliography}

\end{document}